\newtheorem{thm}{Theorem}[section]
\newtheorem{prop}[thm]{Proposition}
\newtheorem{lem}[thm]{Lemma}
\newtheorem{qst}[thm]{Question}
\theoremstyle{definition}
\newtheorem{eg}[thm]{Example}
\theoremstyle{remark}
\newcommand{\Q}{\mathbb Q}
\newcommand{\C}{\mathbb C}
\newcommand{\ph}{\varphi}
\renewcommand{\P}{\mathbb P}
\renewcommand{\phi}{\varphi}
\newcommand{\im}{\mathop{\mathrm{Im}}\nolimits}
\renewcommand{\ker}{\mathop{\mathrm{Ker}}\nolimits}
\newcommand{\coker}{\mathop{\mathrm{Coker}}\nolimits}
\newcommand{\codim}{\mathop{\mathrm{codim}}\nolimits}
\newcommand{\iso}{\cong}
\renewcommand{\hat}{\protect\widehat}
\renewcommand{\i}[1]{\mathfrak{#1}}
\newcommand{\p}{\i{p}}
\newcommand{\q}{\i{q}}
\newcommand{\m}{\i{m}}
\newcommand{\F}{\mathcal F}
\newcommand{\x}{\mathbf x}
\newcommand{\rank}{\mathop{\mathrm{rank}}\nolimits}
\newcommand{\depth}{\mathop{\mathrm{depth}}\nolimits}
\newcommand{\grade}{\mathop{\mathrm{grade}}\nolimits}
\newcommand{\reg}{\mathop{\mathrm{reg}}\nolimits}
\newcommand{\ass}{\mathop{\mathrm{Ass}}\nolimits}
\newcommand{\h}{\mathop{\mathrm{ht}}\nolimits}
\newcommand{\supp}{\mathop{\mathrm{Supp}}\nolimits}
\newcommand{\spec}{\mathop{\mathrm{Spec}}\nolimits}
\newcommand{\pd}{\mathop{\mathrm{pd}}\nolimits}
\newcommand{\tor}{\mathop{\mathrm{Tor}}\nolimits}
\renewcommand{\hom}{\mathop{\mathrm{Hom}}\nolimits}
\newcommand{\ext}{\mathop{\mathrm{Ext}}\nolimits}
\renewcommand{\h}{\mathrm{ht}}
\title[Multiple Structures with Large Projective Dimension]{Multiple Structures with Arbitrarily Large Projective Dimension Supported on Linear Subspaces}
\author{Craig Huneke}
\address{University of Virginia, Department of Mathematics, 141 Cabell Drive, Kerchof Hall, P.O. Box 400137, Charlottesville, VA 22904-4137}
\email{huneke@virginia.edu}
\author{Paolo Mantero}
\address{University of California, Riverside, Department of Mathematics, 900 University Ave., Riverside, CA 92521}
\email{mantero@math.ucr.edu}
\author{Jason McCullough}
\address{Rider University, Department of Mathematics, 2083 Lawrenceville Road,
Lawrenceville, NJ 08648}
\email{jmccullough@rider.edu}
\author{Alexandra Seceleanu}
\address{University of Nebraska, Department of Mathematics, 203 Avery Hall, Lincoln, NE 68588}
\email{aseceleanu2@math.unl.edu}
\subjclass[2010]{Primary: 13D05; Secondary: 14M06, 14M07, 13D02}
\keywords{multiple structures, projective dimension, Hilbert-Samuel multiplicity, primary ideals, unmixed ideals, free resolutions, linkage}
\begin{document}

\begin{abstract} Let $K$ be an algebraically closed field.  There has been much interest in characterizing multiple structures in $\P^n_K$ defined on a linear subspace of small codimension under additional assumptions (e.g. Cohen-Macaulay).  We show that no such finite characterization of multiple structures is possible if one only assumes Serre's $(S_1)$ property holds.  Specifically, we prove that
for any positive integers $h, e \ge 2$ with $(h,e) \neq (2,2)$ and $p \ge 5$ there is a homogeneous ideal $I$ in a polynomial ring over $K$ such that $(1)$ the height of $I$ is $h$, $(2)$ the Hilbert-Samuel multiplicity of $R/I$ is $e$, $(3)$ the projective dimension of $R/I$ is at least $p$ and $(4)$ the ideal $I$ is primary to a linear prime $(x_1,\ldots, x_h)$.  This result is in stark contrast to Manolache's characterization of Cohen-Macaulay multiple structures in codimension $2$ and multiplicity at most $4$ and also to Engheta's characterization of unmixed ideals of height $2$ and multiplicity $2$.  \end{abstract}

\maketitle

\section{Introduction}

Let $K$ be an algebraically closed field. We consider projective multiple (i.e. generically nonreduced) schemes whose reduced subschemes are linear subspaces in $\P_K^n$ for some $n$.  Multiple structures in general have been widely studied with connections to vector bundles \cite{Hartshorne}, \cite{BF}, \cite{Manolache1}, Hartshorne's Conjecture \cite{VatneH}, linkage theory \cite{Migliore} and set-theoretic complete intersections \cite{Szpiro}.  In our setting where the reduced subscheme is a smaller projective space, there are finite characterizations of multiple structures in codimension two in small degree and under certain hypotheses:  Manolache gave structure theorems for scheme-theoretically Cohen-Macaulay
multiple structures of degree at most $4$ \cite{Manolache} and locally complete intersection multiple structures of degree at most $6$ \cite{ManolacheLCI}.  See \cite{ManolacheSurvey} for a nice survey of these results.

The defining ideals of these schemes correspond to homogeneous ideals that are primary to a prime ideal generated by linear forms in a polynomial ring $R$ over $K$.  More broadly, we were interested in the homological structure of homogeneous unmixed ideals of any polynomial ring over $K$, that is, ideals whose associated primes all have the same height.  

Engheta gave a complete characterization of unmixed ideals of height $2$ and multiplicity $2$:

\begin{prop}[{Engheta \cite[Prop. 11]{Engheta1}})]\label{Pe2} Let $R$ be a polynomial ring over an algebraically closed field and let $I \subset R$ be a height two unmixed ideal of multiplicity $2$.  Then $\pd(R/I) \le 3$ and $I$ is one of the following ideals.
\begin{enumerate}
\item A prime ideal generated by a linear form and an irreducible quadric.
\item $(x, y) \cap (x, z) = (x, yz)$ with independent linear forms $x, y, z$.
\item $(w, x) \cap (y, z) = (wy, wz, xy, xz)$ with independent linear forms $w, x, y, z$.
\item The $(x,y)$-primary ideal $(x, y)^2 + (ax + by)$ with independent linear forms $x,y$ and forms $a, b \in \m$ such that $x, y, a, b$ form a regular sequence.
\item $(x, y^2)$ with independent linear forms $x, y$.
\end{enumerate}
\end{prop}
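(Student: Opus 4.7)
My plan is to classify $I$ via its primary decomposition and then verify $\pd(R/I) \le 3$ case by case. Since $I$ is unmixed of height $2$ and $e(R/I) = 2$, the additivity of Hilbert--Samuel multiplicity
\[
e(R/I) = \sum_{P \in \min(I)} \ell_{R_P}(R_P/IR_P) \cdot e(R/P)
\]
admits only three scenarios: (A.i) $I$ is prime with $e(R/I) = 2$; (A.ii) $I$ is primary to a codimension-$2$ linear prime $P$ (so $e(R/P) = 1$) with $\ell_{R_P}(R_P/IR_P) = 2$; or (B) $I = P_1 \cap P_2$ with $P_1 \ne P_2$ distinct codimension-$2$ linear primes, each of local length $1$.

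Case (A.i) follows from the classical fact that a nondegenerate irreducible projective variety of codimension $c$ has degree at least $c+1$. Thus the variety $V(I)$, of codimension $2$ and degree $2$, must lie in some hyperplane $V(L)$; within that hyperplane it is cut out by an irreducible quadric $Q$, giving $I = (L, Q)$, which is case~(1). Case (B) reduces, after a change of linear coordinates, to classifying pairs of $2$-codimensional linear subspaces by the dimension of the intersection of their spans: if they share a linear form we obtain $(x,y) \cap (x,z) = (x, yz)$ (case~(2)), and otherwise $(w,x) \cap (y,z) = (wy, wz, xy, xz)$ (case~(3)). In the latter the identity $P_1 \cap P_2 = P_1 P_2$ follows because $P_1$ and $P_2$ are generated by disjoint sets of linear forms.

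The central case is (A.ii), where after a change of variables $P = (x, y)$. The length-two condition forces $R_P/IR_P$ to be a local Artin ring whose maximal ideal squares to zero, so $P^2 R_P \subseteq IR_P$ (whence $P^2 \subseteq I$ globally, since $P^2$ is $P$-primary) and $IR_P = P^2 R_P + (f)R_P$ for some $f$ whose image in $PR_P/P^2 R_P \cong k(P)^{\oplus 2}$ is nonzero. Writing this image as $\bar a\,\bar x + \bar b\,\bar y$ and clearing denominators produces $a, b \in R$ with $f \equiv ax + by \pmod{P^2}$, and hence $I \supseteq (x,y)^2 + (ax+by) =: J$. A dichotomy then completes the classification: if $a, b$ can be chosen in $\m$ so that $x, y, a, b$ is a regular sequence, one obtains case~(4); otherwise one of $a, b$ is a unit, and a linear coordinate change in $x, y$ absorbs it to reduce $I$ to $(x, y^2)$, which is case~(5). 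The subtlest point is verifying that $J$ is itself $P$-primary under the regular-sequence hypothesis, yielding $I = J$ since both ideals are $P$-primary with identical localization at $P$.

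Finally, for the projective dimension bound: cases (1), (2), and (5) display $I$ as a complete intersection of height $2$ on inspection of generators, so $\pd(R/I) = 2$ by the Koszul complex. Case (3) is handled via the short exact sequence
\[
0 \to R/I \to R/P_1 \oplus R/P_2 \to R/(P_1 + P_2) \to 0,
\]
whose long exact sequence in local cohomology with respect to the irrelevant ideal yields $\pd(R/I) = 3$ by Auslander--Buchsbaum. Case (4) is the most delicate: a direct computation of the minimal free resolution---or alternatively a linkage argument using a complete intersection $(x^2, ax+by) \subseteq I$ to link $I$ to a Cohen--Macaulay ideal---produces a resolution of length exactly $3$.
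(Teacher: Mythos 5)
A preliminary remark: the paper does not prove this proposition at all; it is quoted verbatim from Engheta \cite[Prop.~11]{Engheta1}, so there is no internal proof to compare against. Judged on its own terms, your outline follows the natural (and essentially Engheta's) route: the associativity formula splits the problem into the three scenarios (prime of multiplicity $2$, primary to a linear prime with local length $2$, intersection of two linear primes), and your treatments of cases (1), (2), (3) and of the reduction $P^2\subseteq I$, $I\supseteq (x,y)^2+(ax+by)$ in the primary case are sound.

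The genuine gaps are concentrated in scenario (A.ii), which is the heart of the proposition. First, the dichotomy ``either $a,b$ can be chosen in $\m$ with $x,y,a,b$ a regular sequence, or one of $a,b$ is a unit'' is asserted, not proved, and its negation is not what you claim: the datum is a point $[\bar a:\bar b]\in\P^1(k(P))$, $k(P)=\mathrm{Frac}(R/(x,y))$, and one must show that every such point admits a representative $a'x+b'y\in I$ with either $a',b'$ scalars (whence case (5) after a linear change of coordinates) or $a',b'$ homogeneous of equal degree, coprime modulo $(x,y)$, so that $\h(x,y,a',b')=4$ (case (4)). For instance $[\bar z:\bar z^2]$ has no representative with $x,y,a,b$ a regular sequence, and neither original coordinate is a unit; one must pass to $[1:\bar z]$ and then explain why the resulting ideal still lands on the list. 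This re-choosing argument (writing $\bar a/\bar b$ in lowest terms, using homogeneity to control degrees, and using that $I$ is $P$-primary to conclude the new representative lies in $I$) is the real content of the case and is missing. Second, you explicitly flag but do not prove that $J=(x,y)^2+(ax+by)$ is $(x,y)$-primary under the regular-sequence hypothesis; note that $R/J$ has projective dimension $3$, hence is \emph{not} Cohen--Macaulay, so unmixedness is not automatic and requires an argument (Engheta gets it, together with $\pd(R/J)\le 3$, by linking $J$ to a Cohen--Macaulay ideal via the complete intersection $(x^2,ax+by)$). Third, the projective dimension bound in case (4) is likewise deferred to ``a direct computation.'' Since case (4) is precisely the case that distinguishes this classification from a triviality, these omissions constitute a genuine gap rather than routine detail.
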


 The hypothesis that $K$ is algebraically closed is essential.  Take for instance $R = \Q[w, x, y, z]$ and $P = (w^2 + x^2, y^2 + z^2, wz - xy, wy + xz)$.  Then $P$ is a prime ideal of height $2$ and multiplicity $2$, but is not degenerate (i.e. does not contain a linear form) as in case (1) above.  Note that over $\C$, $P \C[w, x, y, z]$ is no longer prime but rather of type $(3)$ since
\[ P\C[w, x, y, z] = (w + ix, y + iz) \cap (w - ix, y - iz).\]

One might wonder if there is a finite list for other multiple structures.  We show that this is hopeless is a very strong form.  Specifically, we give an explicit construction of homogeneous primary ideals of any other height and multiplicity and arbitrarily high projective dimension.  We state our main theorem here:

\begin{thm}\label{Tmain} Let $K$ be an algebraically closed field.  For any integers $h, e \ge 2$ with $(h, e) \neq (2,2)$ and for any integer $p \ge 5$, there exists an unmixed ideal $I_{h,e,p}$ of height $h$ and multiplicity $e$ in a polynomial ring $R$ over $K$ with $\sqrt{I_{h,e,p}}$ a linear prime and such that $\pd(R/I_{h,e,p}) \ge p$.  
\end{thm}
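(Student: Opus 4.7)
The plan is to exhibit, for each admissible triple $(h,e,p)$, an explicit $(x_1,\ldots,x_h)$-primary ideal in a polynomial ring $R = K[x_1,\ldots,x_h,y_1,\ldots,y_m]$ whose projective dimension can be made as large as desired. I would look for ideals of the shape
\[
I = (x_1,\ldots,x_h)^N + J,
\]
where the auxiliary ideal $J$ lies inside the linear prime $\p = (x_1,\ldots,x_h)$ (so that $\sqrt{I}=\p$ automatically), but whose generators involve many new variables $y_i$ in order to build up high projective dimension. The multiplicity requirement imposes a delicate balance: $N$ and $J$ must together cut the length of the Artinian local ring $R_\p/IR_\p$ down to exactly $e$.

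I would begin with a few base cases done by hand---for instance $(h,e) = (2,3), (3,2), (3,3)$ with $p = 5$---and then use elementary reductions to boost height and multiplicity. Adding a new variable $x_{h+1}$ together with a suitable new generator should pass from $(h,e,p)$ to $(h+1,e,p)$, while adjoining a single carefully chosen form should raise $\l(R_\p/IR_\p)$ by one without disturbing the other invariants. These reductions concentrate the main difficulty on producing, for some fixed $(h_0,e_0)$, a family of ideals whose projective dimension is unbounded in the parameter $p$.

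To make the projective dimension unbounded, I would take an auxiliary family $J_p$ in disjoint variables $y_1,\ldots,y_{m(p)}$ with $\pd \to \infty$---natural candidates are Koszul-type ideals, ideals of minors, or the primary ideals arising from the Rees-like algebra construction of McCullough--Peeva---and push them inside $\p$ by replacing each generator $g$ of $J_p$ by $x_1 \cdot g$. The key point is that this operation is tuned to preserve both the radical and the local length at $\p$, while retaining enough of the syzygy information of $J_p$ that $\pd(R/I)$ is bounded below by $\pd(R/J_p)$ up to an explicit constant. The final estimate on projective dimension would then follow from the Auslander--Buchsbaum formula once one shows that $\depth(R/I)$ stays bounded while the number of variables of $R$ grows.

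The main obstacle, and the subtlest step, is to verify rigorously that the Hilbert--Samuel multiplicity of $R/I$ equals exactly $e$ (and not $e + c$ for some unintended $c > 0$). After introducing many new generators $x_1 g$ to force large projective dimension, one must carefully analyze the associated graded ring of $R_\p/IR_\p$ with respect to $\p R_\p$ to show that these generators contribute only a controllable amount to the local length. Once the multiplicity is pinned down, the height, the radical, and the lower bound on projective dimension all follow from standard arguments.
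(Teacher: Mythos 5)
There is a genuine gap, and it sits exactly where the two requirements of the theorem collide. Your mechanism for forcing large projective dimension --- replace each generator $g$ of a high-projective-dimension ideal $J_p$ by $x_1g$, then argue via Auslander--Buchsbaum that $\depth(R/I)$ stays bounded --- destroys unmixedness. Concretely, for $I=(x,y)^2+(xa,xb)$ in $K[x,y,a,b]$ one has $I:x=(x,y,a,b)$, so the maximal ideal is an embedded associated prime and $I$ is not unmixed; the same failure occurs in general because $x_1g\in I$ while $x_1\notin I$ and $g\notin\sqrt{I}$. This is not repaired by tuning $N$. Worse, the two goals are in direct tension: once $I$ is required to be primary to the linear prime $(x_1,\ldots,x_h)$, it has no associated primes of large height, so the one cheap way to bound $\depth(R/I)$ from above (exhibiting a socle-type element witnessing an embedded prime of large height, as in Lemma~\ref{Lasspd}) is forbidden by hypothesis --- the paper remarks on precisely this obstruction. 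Your proposal offers no substitute argument for why $\pd(R/I)$ remains large after all generators are pushed into $\p$, and the difficulty you single out (pinning down the multiplicity) is in fact the more routine of the two issues, being a length computation in $R_\p/I_\p$.

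The idea missing from your plan, and the one the paper is built on, is linkage. One first constructs a primary ideal $L$ such as $L=(x,y)^3+(y^2f+xyg+x^2h)$, where $f,g,h$ generate a three-generated ideal of large projective dimension: the three forms enter through a \emph{single} combined generator rather than through separate generators $x_1g_i$, and unmixedness is verified by computing the (short, completely explicit) minimal free resolution of $R/L$ via the Buchsbaum--Eisenbud acyclicity criterion together with the $(S_1)$ criterion on heights of ideals of minors. The ideal $L$ itself has \emph{small} projective dimension; the homological complexity is hidden in its canonical module $\ext^h_R(R/L,R)$, whose projective dimension is large because the dual of the last differential is essentially the ideal $(x,y,f,g,h)$. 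One then passes to $I=(\x):L$ for a suitable complete intersection $\x\subset L$: this $I$ is again primary to the same linear prime, satisfies $e(R/I)=\prod d_i-e(R/L)$ (which, together with varying the complete intersection and adjoining new linear variables to raise the height, yields the full range of $(h,e)$), and satisfies $\pd(R/I)=\pd(\ext^h_R(R/L,R))+1$. Your outer skeleton (a few base cases plus height-raising reductions) matches the paper, but without some such indirect transfer of complexity through the linked ideal, the construction cannot simultaneously achieve unmixedness and unbounded projective dimension.
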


Our strategy is to produce three primary ideals of height $2$ and one of height $3$ in low multiplicity whose canonical modules have arbitrarily high projective dimension.  This construction relies on the Buchsbaum-Eisenbud acyclicity theorem \cite{BE} and on the existence of three-generated ideals with high projective dimension by Burch \cite{Burch} and Kohn \cite{Kohn}.  Indeed our result can be seen as closely related to the papers of Burch and Kohn but in a different direction. Finally, linkage arguments are used to produce the primary ideals in the main theorem.  

Our original motivation stems from the following open question first posed by Stillman:

\begin{qst}[{Stillman \cite[Problem 15.8]{PS}}]\label{Qstillman} Is there a bound, independent of n, on the projective dimension of ideals in $R = K[x_1, . . . , x_n]$ which are generated by $N$ homogeneous
polynomials of given degrees $d_1,\ldots, d_N$?
\end{qst}

In light of Bruns's Theorem \cite{Bruns}, the most significant case is that of a three-generated ideal $(f,g,h)$.  The answer is clear when the height of $(f,g,h)$ is $1$ or $3$, so we can assume the height is $2$ and that $f, g$ form a regular sequence.  In this case, we have a short exact sequence
\[0 \to \frac{R}{(f, g):h} \xrightarrow{h} \frac{R}{(f, g)} \to \frac{R}{(f, g, h)} \to 0.\]
Since $\pd(R/(f, g)) = 2$, it suffices to bound the projective dimension of the left-hand term.  The ideal $(f, g):h$ is unmixed of height $2$ and multiplicity at most $\deg(f) \cdot \deg(g)$.  This argument reduces the problem to bounding the projective dimension of unmixed ideals in terms of their height and multiplicity.  Theorem~\ref{Tmain} shows that this is impossible in general, even in height $2$.   This answers Question~6.4 raised by two of the authors in \cite{MS} negatively.

We also remark that Caviglia \cite{Caviglia} showed that Question~\ref{Qstillman} is equivalent to the question in which we replace projective dimension with regularity.  One could use a similar strategy and try to bound the regularity of unmixed ideals of a fixed height and multiplicity.  Proposition~\ref{Pe2} already shows this is fruitless.  The ideals $J_n = (x^2, xy, y^2, w^nx+z^ny)$ are unmixed of height $2$ and multiplicity $2$ and satisfy $\reg(R/J_n) = n$.

The rest of this paper is structured as follows: Section~\ref{Sb} contains notation and basic results needed for the remainder of the paper.  In Section~\ref{Smr} we prove the main theorem while relegating the technical details about the construction of the four specific ideals mentioned above to Section~\ref{S4ideals}.  This construction is summarized in Proposition~\ref{P4ideals}.  In the final Section~\ref{Seq} we construct a specific example from our family of primary ideals with large projective dimension and discuss some remaining questions.

\section{Background}\label{Sb}

For the rest of this paper, $R$ will denote a polynomial ring over an algebraically closed field $K$.   We consider $R$ as a standard graded ring.  We write $R_i$ for the $K$-vector space of homogeneous degree $i$ polynomials of $R$.  For a finitely generated graded $R$-module $M$, there exists a unique (up to isomorphism) minimal graded free resolution
\[F_0 \xleftarrow{d_1} F_1 \xleftarrow{d_2} \cdots \xleftarrow{d_p} F_p \leftarrow 0,\]
that is, an exact sequence of graded maps of finitely generated graded free modules $F_i = \bigoplus_j R(-j)^{\beta_{i,j}(M)}$, where $M \iso \coker(d_1)$.  Here $R(-d)$ denotes a rank one free module with generator in degree $d$ so that $R(-d)_i = R_{i-d}$.  The numbers $\beta_{i,j}(M)$ are invariants of $M$ called the \textit{graded Betti numbers} of $M$ and can also be defined as $\beta_{i,j}(M) = \tor_i^R(M,K)_j$.  The length $p$ of the minimal free resolution of $M$ is called the \textit{projective dimension} of $M$ and is denote $\pd(M)$.  By convention, we often write the Betti numbers of $M$ as a matrix called the $\textit{Betti table}$ of $M$:
$$\begin{tabular}{c|cccccc}
       &0&1&2&$\cdots$&i&$\cdots$\\ \hline
         \text{0:}&$\beta_{0,0}(M)$&$\beta_{1,1}(M)$&$\beta_{2,2}(M)$&$\cdots$&$\beta_{i,i}(M)$&$\cdots$\\
         \text{1:}&$\beta_{0,1}(M)$&$\beta_{1,2}(M)$&$\beta_{2,3}(M)$&$\cdots$&$\beta_{i,i+1}(M)$&$\cdots$\\
         \text{2:}&$\beta_{0,2}(M)$&$\beta_{1,3}(M)$&$\beta_{2,4}(M)$&$\cdots$&$\beta_{i,i+2}(M)$&$\cdots$\\
  $\vdots$&$\vdots$&$\vdots$&$\vdots$&&$\vdots$&\\
  \text{j:}&$\beta_{0,j}(M)$&$\beta_{1,j+1}(M)$&$\beta_{2,j+2}(M)$&$\cdots$&$\beta_{i,i+j}(M)$&$\cdots$\\
    $\vdots$&$\vdots$&$\vdots$&$\vdots$&&$\vdots$&\\
    \end{tabular}$$
    The projective dimension of $M$ is then the index of the last nonzero column in the Betti table of $M$.

The following lemma is useful when computing projective dimension. 

\begin{prop}\label{Ppd} Let $0 \to A \to B \to C \to 0$ be a short exact sequence of $R$-modules.  Then
\begin{enumerate}
\item $\pd(A) \le \max\{\pd(B), \pd(C) - 1\}$,
\item $\pd(B) \le \max\{\pd(A), \pd(C)\}$,
\item $\pd(C) \le \max\{\pd(A) + 1, \pd(B)\}$.
\end{enumerate}
\end{prop}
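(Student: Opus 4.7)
The plan is to deduce all three inequalities from the long exact sequence of Tor obtained by tensoring $0 \to A \to B \to C \to 0$ with the residue field $K$. Since $R$ is a polynomial ring over a field and we are working with finitely generated graded modules, the projective dimension of any such module $M$ can be characterized as
\[\pd(M) = \sup\{i \ge 0 : \tor_i^R(M,K) \neq 0\},\]
so bounding $\pd(M)$ reduces to showing that high-index Tor groups vanish.

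First, I would write down the relevant piece of the long exact sequence:
\[\cdots \to \tor_{i+1}^R(C,K) \to \tor_i^R(A,K) \to \tor_i^R(B,K) \to \tor_i^R(C,K) \to \tor_{i-1}^R(A,K) \to \cdots\]
For (1), fix $i > \max\{\pd(B), \pd(C)-1\}$. Then $\tor_i^R(B,K) = 0$ because $i > \pd(B)$, and $\tor_{i+1}^R(C,K) = 0$ because $i+1 > \pd(C)$. Exactness at $\tor_i^R(A,K)$ forces $\tor_i^R(A,K) = 0$, which gives the desired bound on $\pd(A)$. For (2), take $i > \max\{\pd(A), \pd(C)\}$; then both $\tor_i^R(A,K)$ and $\tor_i^R(C,K)$ vanish, so exactness at $\tor_i^R(B,K)$ forces it to vanish as well. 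For (3), fix $i > \max\{\pd(A)+1, \pd(B)\}$; then $\tor_i^R(B,K) = 0$ and $\tor_{i-1}^R(A,K) = 0$, so $\tor_i^R(C,K)$ is squeezed between two zero groups and must vanish.

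There is no serious obstacle here; the statement is a classical homological fact, and the only care needed is to keep track of the index shifts in the long exact sequence (particularly the $-1$ in (1) and the $+1$ in (3), which come from the connecting homomorphism). If one wanted a slightly slicker presentation, one could instead invoke the Auslander--Buchsbaum formula together with the depth lemma for short exact sequences, but the Tor long exact sequence argument above is more direct and avoids any finiteness hypotheses beyond those already in place.
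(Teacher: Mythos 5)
Your argument is correct. The paper states this proposition without proof, as it is a classical fact, so there is nothing to compare against; the long exact sequence of $\tor^R_\bullet(-,K)$ together with the characterization $\pd(M)=\sup\{i:\tor_i^R(M,K)\neq 0\}$ is the standard route, and your index bookkeeping in all three cases (in particular the shift by $1$ coming from the connecting homomorphism in (1) and (3)) is accurate. The only caveat worth recording is that the residue-field criterion for projective dimension requires the modules to be finitely generated (graded, or over the local ring at the irrelevant maximal ideal), which you note explicitly and which holds in every application of the proposition in this paper.
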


For an ideal $I$, the \textit{unmixed part} of $I$ is the intersection of primary components of $I$ corresponding to primes of minimal height:
\[I^{un} = \bigcap_{\substack{\p \in \ass(I)\\\h(\p) = \h(I)}} \q_\p, \]
where $\q_\p$ is the $\p$-primary component of $I$.
An ideal $I$ is \textit{unmixed} if $I = I^{un}$.  By way of the associativity formula, we have a way of characterizing many unmixed ideals.  We denote by $\lambda(-)$ the length of a module and by $e(-)$ the Hilbert-Samuel multiplicity.

\begin{thm}[Associativity Formula (cf. {\cite[Thm 11.2.4]{HS}})] Let $I$ be an ideal of $R$.  Then
\[e(R/I) = \sum_{\substack{\p \in \spec(R)\\\h(\p) = \h(I)}} e(R/\p) \lambda(R_\p/I_\p).\]
\end{thm}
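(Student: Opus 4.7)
The plan is to realize the multiplicity $e(R/I)$ through a graded prime filtration of $R/I$, use additivity of the Hilbert-Samuel multiplicity on short exact sequences, and then count how often each height-$h$ prime appears as a filtration quotient by localizing at it.

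First I would construct a chain
\[0 = M_0 \subsetneq M_1 \subsetneq \cdots \subsetneq M_n = R/I\]
of graded submodules such that each $M_i/M_{i-1} \iso (R/\p_i)(-a_i)$ for some prime $\p_i$ and shift $a_i \in \Z$. Such a filtration exists because $R$ is Noetherian and $R/I$ is finitely generated; at each stage one picks an associated prime of the remaining cokernel and peels off a shifted cyclic quotient. Since each $\p_i$ contains $\ann(R/I) \supseteq I$, one has $\h(\p_i) \ge \h(I) = h$.

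Second I would apply additivity. Setting $d = \dim R/I$, the Hilbert-Samuel multiplicity is additive on short exact sequences of finitely generated graded modules of dimension at most $d$ and vanishes on those of dimension $<d$, because in the graded setting $e(-)$ is $(d-1)!$ times the leading coefficient of the Hilbert polynomial, which is zero for modules of smaller dimension. Summing the contributions of the filtration quotients, and using that $R$ is catenary so that $\dim R/\p_i = d \iff \h(\p_i) = h$, yields
\[e(R/I) \;=\; \sum_{i\,:\,\h(\p_i)=h} e(R/\p_i) \;=\; \sum_{\h(\p)=h} e(R/\p)\cdot \#\{i : \p_i = \p\}.\]

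Finally, I would identify the counting function with $\lambda(R_\p/I_\p)$. Fix a height-$h$ prime $\p$ and localize the filtration at $\p$. If $\p_i \ne \p$ with $\h(\p_i) \ge h$, then $\p_i \not\subseteq \p$ (either by equality of heights, or because $\p_i \subsetneq \p$ with $\h(\p_i) > h$ would force $\h(\p) > h$), so $(R/\p_i)_\p = 0$. The quotients with $\p_i = \p$ each localize to a single copy of $R_\p/\p R_\p$. Thus the localized filtration exhibits $R_\p/I_\p$ as possessing a composition series of length $\#\{i : \p_i = \p\}$, which is therefore $\lambda(R_\p/I_\p)$. Primes not containing $I$ satisfy $I_\p = R_\p$, giving zero length and hence zero contribution, so they may safely be included in the sum. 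The main technical obstacle, in my view, is rigorously justifying the additivity and vanishing of the multiplicity across modules of different dimension; the rest of the argument is essentially bookkeeping on prime filtrations and localization.
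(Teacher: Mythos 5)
Your proposal is correct: the prime-filtration argument, additivity of the leading Hilbert coefficient on short exact sequences, and the localization count identifying the multiplicity of $\p$ among the filtration quotients with $\lambda(R_\p/I_\p)$ is the standard and complete proof of the associativity formula. The paper itself offers no proof of this statement --- it is quoted as background with a citation to Huneke--Swanson --- so there is nothing to compare against; your argument matches the textbook proof, and you correctly handle the two points where care is needed (shifts do not change leading coefficients, and primes of height $\h(I)$ not containing $I$ contribute zero).
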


It follows that $e(R/I) = e(R/I^{un})$.  


Let $\depth(M)$ denote the length of a maximal regular sequence on $M$.  We say that $M$ satisfies \textit{Serre's condition $(S_k)$} (or simply is $(S_k)$) if
\[\depth(M_\p) \ge \min_{\p \in \supp(M)}\{k,\dim(M_\p)\}.\]
An ideal $I$ is unmixed if and only if $R/I$ satisfies $(S_1)$.

Let $\ph:F \to G$ be a map between finite rank free module $F$ and $G$.  After choosing bases for $F$ and $G$, we can represent $\ph$ be a matrix.  For a positive integer $j$ we denote by $I_j(\ph)$ the ideal of $j \times j$ minors of the entries in the matrix representing $\ph$.  Note that $I_j(\ph)$ does not depend on the choice of bases (cf. \cite[p. 497]{Eisenbud1}).

\begin{thm}[Buchsbaum-Eisenbud  (cf. {\cite[Thm. 20.9]{Eisenbud1}})]\label{Tres} Let $\F$ be a complex free $R$-modules of finite rank
\[\F: F_0 \xleftarrow{d_1} F_1 \xleftarrow{d_2} F_2 \xleftarrow{d_3} \cdots \xleftarrow{d_p} F_p \leftarrow 0.\]
Set $r_j = \sum_{i = j}^p (-1)^{p-i} \rank F_i$.  Then $\F$ is a resolution of $M := \coker(d_1)$ if and only if 
\[\h(I_{r_j}(d_j)) \ge j \qquad \text{for all $j = 1,\ldots, p$.}\]
\end{thm}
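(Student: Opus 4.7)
The plan is to check exactness locally at every prime of $R$ and induct on $\h(\p)$, with the Peskine-Szpiro Acyclicity Lemma as the main tool in the harder direction. Observe first that $r_j$ is the rank $d_j$ must take in any resolution: splitting $\F$ into short exact sequences $0 \to \im d_{j+1} \to F_j \to \im d_j \to 0$ and taking alternating sums of ranks yields this identity whenever $\F$ is exact.

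For the ``only if'' direction, assume $\F$ is a resolution of $M$. If some prime $\p$ with $\h(\p) < j$ contained $I_{r_j}(d_j)$, then $(d_j)_\p$ would have every $r_j \times r_j$ minor in $\p R_\p$, so its rank modulo $\p R_\p$ would drop below $r_j$. Running the Auslander-Buchsbaum formula down through the successive syzygies of $M_\p$ inside the exact complex $\F_\p$ then forces $\depth R_\p \geq j$, contradicting $\h(\p) < j$ since $R_\p$ is Cohen-Macaulay.

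For the ``if'' direction, localize $\F$ at a prime $\p$ and induct on $\h(\p)$. Whenever $j > \h(\p)$, the hypothesis gives $\h(I_{r_j}(d_j)) \geq j > \h(\p)$, so $I_{r_j}(d_j) \not\subseteq \p$; hence $(d_j)_\p$ admits a unit $r_j \times r_j$ minor, and one can split off a trivial summand from $\F_\p$ at level $j$. After performing these splittings for all $j > \h(\p)$, the remaining complex has length at most $\h(\p)$, and its free modules have depth $\h(\p) \geq i$ at each surviving level $i$. By the inductive hypothesis, $\F_\q$ is exact for every $\q \subsetneq \p R_\p$, so each nonzero $H_i(\F_\p)$ is supported only at the maximal ideal of $R_\p$ and hence has depth zero. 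The Peskine-Szpiro Acyclicity Lemma --- which asserts that a complex of finitely generated modules $L_\bullet$ over a Noetherian local ring with $\depth L_i \geq i$ is acyclic once every nonzero $H_i$ has depth zero --- then forces $H_i(\F_\p) = 0$ for every $i \geq 1$.

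The main obstacle is the bookkeeping in the ``if'' direction: one must coherently combine the inductive hypothesis on smaller primes, the trivial splittings in high degrees, and the depth conditions of the Acyclicity Lemma, all while tracking how the truncations interact with the height conditions on the various Fitting ideals. The Acyclicity Lemma itself is a delicate depth-chasing argument, but it is typically invoked as a black box.
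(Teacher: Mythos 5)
This theorem is stated in the paper only as a citation of \cite[Thm.~20.9]{Eisenbud1}; the paper gives no proof of its own, so there is nothing internal to compare against. Your sketch is the standard argument from that source (and from Bruns--Herzog): alternating sums of ranks to identify $r_j$, Auslander--Buchsbaum for the ``only if'' direction, and for the ``if'' direction localization, splitting off trivial summands in degrees $j>\h(\p)$ where $I_{r_j}(d_j)\not\subseteq\p$, Noetherian induction to force the homology of $\F_\p$ to be supported at $\p R_\p$, and the Peskine--Szpiro Acyclicity Lemma to kill it. The outline is correct; the only points deserving explicit care in a full write-up are the descending induction (starting at $j=p$, where $r_p=\rank F_p$ makes $(d_p)_\p$ split injective) that justifies the successive splittings, and the observation that these splittings leave the ideals of minors and the homology in low degrees unchanged.
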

Note that in our setting where $R$ is a polynomial ring, $\h(I) = \grade(I)$ for an ideal $I$.  We prefer to work with height but the more general statement of the above theorem involves the grades of ideals of minors.

The following result seems to be well-known, but we sketch a proof for completeness.

\begin{prop}\label{Csk} Using the notation from the previous theorem, suppose $\F$ is a minimal free resolution of $M$.  Then $M$ satisfies Serre's condition $(S_k)$ if and only if
\[\h(I_{r_j}(d_j)) \ge \min\{\dim(R), j+k\} \qquad \text{for all $j = \codim(M)+1,\ldots, p$.}\]
\end{prop}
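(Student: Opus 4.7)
The plan is to prove a local characterization of $\pd_{R_\p}(M_\p)$ in terms of the ideals of minors of the differentials of $\F$, and then translate the condition $(S_k)$ via the Auslander--Buchsbaum formula applied at every prime.

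\textbf{Step 1 (local projective dimension).} The key lemma is that for every prime $\p$,
\[\pd_{R_\p}(M_\p) = \max\bigl\{j \ge 0 : I_{r_j}(d_j) \subseteq \p\bigr\},\]
with the convention that the maximum over an empty set is zero. Since localization preserves exactness, $\F_\p$ is a free resolution of $M_\p$ (in general non-minimal). The local Betti numbers are $\beta_j^{R_\p}(M_\p) = \dim_{\kappa(\p)} H_j(\F \otimes_R \kappa(\p))$; exactness of $\F$ gives $\rank F_j = r_j + r_{j+1}$, and by definition of the ideal of minors, $\rank(d_j \bmod \p) = r_j$ precisely when $I_{r_j}(d_j) \not\subseteq \p$. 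An induction from $j = p$ downward yields the claim.

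\textbf{Step 2 (Auslander--Buchsbaum).} Since each $R_\p$ is regular local, $\depth(M_\p) = \h(\p) - \pd_{R_\p}(M_\p)$. Thus $(S_k)$ at $\p$ is equivalent to
\[\pd_{R_\p}(M_\p) \le \h(\p) - \min\{k, \dim M_\p\}.\]
Because $R$ is Cohen--Macaulay and catenary, one has $\dim(M_\p) \le \h(\p) - \codim(M)$; combined with Auslander--Buchsbaum this shows that whenever $\pd_{R_\p}(M_\p) \le \codim(M)$, the $(S_k)$ inequality at $\p$ is automatic. This justifies the restriction $j > \codim(M)$ in the statement.

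\textbf{Step 3 (equivalence).} For $(\Leftarrow)$, take $\p \in \supp(M)$ and set $q = \pd_{R_\p}(M_\p)$. If $q \le \codim(M)$, Step~2 immediately gives $(S_k)$ at $\p$; otherwise $q > \codim(M)$ and $I_{r_q}(d_q) \subseteq \p$, so the hypothesis yields $\h(\p) \ge \min\{\dim R, q+k\}$, which directly translates to the $(S_k)$ inequality at $\p$ after a short case analysis on the size of $\dim M_\p$. For $(\Rightarrow)$, fix $j$ in the prescribed range and let $\p$ be a prime containing $I_{r_j}(d_j)$ of minimum height. Step~1 forces $\pd_{R_\p}(M_\p) \ge j$, so $\p \in \supp(M)$; applying $(S_k)$ at $\p$, combined with $\dim(M_\p) \le \h(\p) - \codim(M)$ and $j > \codim(M)$, yields $\h(\p) \ge \min\{\dim R, j+k\}$.

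The main technical obstacle is Step~1, which requires careful tracking of how the localized complex $\F_\p$ fails to be minimal: one must argue that the presence or absence of a unit entry in the matrix representing $d_j$ after localization is controlled precisely by whether $I_{r_j}(d_j) \subseteq \p$. The rest of the argument is essentially bookkeeping via Auslander--Buchsbaum, with the $\min\{\dim R, j+k\}$ in the statement absorbing the degenerate cases where $j + k$ would exceed the dimension of $R$.
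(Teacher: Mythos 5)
Your overall strategy is the same as the paper's: its proof is a single chain of equivalences that compresses your Steps 1--3 (the fourth equivalence there is exactly your Step 1, and the second and third equivalences are your Step 2). Your Step 1 is correct, and is in fact stated more carefully than in the paper; likewise the inequality $\dim(M_\p) \le \h(\p) - \codim(M)$ in Step 2 is the correct substitute for the paper's assertion that $\codim(M_\p) = \codim(M)$ for all $\p \in \supp(M)$.

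The gap sits precisely in the two places you defer. In the $(\Leftarrow)$ direction, the subcase $q > \codim(M)$ with $\h(\p) = \dim(R)$ and $q + k > \dim(R)$ does not close by ``a short case analysis'': the hypothesis only yields $\h(\p) \ge \dim(R)$, which carries no information, and the implication genuinely fails there. Take $R = K[x,y]$, $M = R/(x^2,xy)$, $k=1$: then $\codim(M)=1$, $p=2$, $r_2=1$, and $\h(I_{r_2}(d_2)) = \h((x,y)) = 2 = \min\{\dim(R),\,2+1\}$, so the stated condition holds, yet $(x,y)$ is an embedded prime of $M$ and $M$ is not $(S_1)$. (What one can actually prove is $\h(I_{r_j}(d_j)) \ge j+k$ unless $I_{r_j}(d_j) = R$; the cap at $\dim(R)$ is too weak at maximal-height primes.) In the $(\Rightarrow)$ direction, $(S_k)$ at $\p$ gives $j \le \max\{\h(\p)-k,\ \h(\p)-\dim(M_\p)\}$, and your inequality $\h(\p)-\dim(M_\p) \ge \codim(M)$ points the wrong way to eliminate the second term of the maximum; you need equality, i.e.\ equidimensionality of $M$. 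Without it this direction also fails: $M = R/(xy,xz)$ over $K[x,y,z]$ is $(S_1)$, but $\h(I_{r_2}(d_2)) = \h((y,z)) = 2 < \min\{3,\,2+1\}$. The paper's own proof elides the same two points (its applications are safe because the modules there are cyclic over primary ideals, hence equidimensional, and live in rings of sufficiently large dimension), but a complete argument must either add these hypotheses or adjust the statement; as written, the ``short case analysis'' you postpone cannot be carried out.
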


\begin{proof} We argue by contrapositive.  
\begin{align*}
&M \text{ is not } S_k  \Leftrightarrow  \exists\, \p \in \supp(M) \text{ with } \depth(M_\p) < \min\{k, \dim(M_\p)\}\\
& \Leftrightarrow \exists\, \p \in \supp(M)\text{ with } \pd_{R_\p}(M_\p) > \max\{\h(\p) - k, \h(\p) - \dim(M_\p)\}\\
& \Leftrightarrow \exists\, \p \in \supp(M)\text{ with } \pd_{R_\p}(M_\p) > \max\{\h(\p) - k, \codim(M)\}\\
&  \Leftrightarrow \exists\, \p \in \supp(M)\text{ with }\p \supset I_{r_{i+1}}(d_{i+1}) \text{ and } i = \max\{\h(\p) - k, \codim(M)\}\\
& \Leftrightarrow \h(I_{r_{i+1}}(d_{i+1})) \le \min\{\dim(R),i + k\} \text{ for some } i \ge \codim(M). 
\end{align*}
For the second equivalence, we use the Auslander-Buchsbaum Theorem.  The third equivalence uses that fact that $\codim(M) = \codim(M_\p)$ for all $\p \in \supp(M)$.  The fourth equivalence follows because if $\p \not\supset I_{r_{i+1}}(d_{i+1})$, then $d_{i+1}$ splits over $R_\p$ and hence $\pd_{R_\p}(M_\p) \le i$.  Otherwise, $d_{i+1}$ remains minimal over $R_\p$ and $\pd_{R_\p}(M_\p) > i$.
\end{proof}

Two unmixed ideals $I$, $J$ are \textit{linked} via the complete intersection $(\x)$, where $\x = x_1,\ldots, x_h \in I \cap J$, if $I = (\x):J$ and $J = (\x):I$.  Moreover, if $x_1,\ldots, x_h$ are homogeneous elements of degrees $d_1,\ldots, d_h$, respectively, then $e(R/I) + e(R/J) = e(R/(\x)) = \prod_{i = 1}^h d_i$.  If $I$ and $I'$ are linked to the same ideal $J$, then they share many properties.  In particular, we will make use of the following well-known fact.  (cf. \cite[Lemma 2.6]{EnghetaT}.)

\begin{prop}\label{P2link} Suppose $I$ and $I'$ are unmixed ideals of $R$ linked to the same ideal $J$.  Then
\[\pd(R/I) = \pd(R/I').\]
\end{prop}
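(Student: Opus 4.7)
The plan is to prove the equality by invoking the classical Peskine-Szpiro linkage isomorphism to identify $I/(\mathbf{x})$ and $I'/(\mathbf{y})$, for the respective linking complete intersections, as the same $R$-module up to degree shift, and then to lift this identification to $\pd(R/I) = \pd(R/I')$.

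First I would establish the linkage isomorphism. Let $(\mathbf{x}) = (x_1, \ldots, x_h)$ be the complete intersection linking $I$ to $J$, and $(\mathbf{y}) = (y_1, \ldots, y_h)$ the one linking $I'$ to $J$. From $I = (\mathbf{x}):_R J$ one has directly
$$I/(\mathbf{x}) \cong \hom_{R/(\mathbf{x})}(R/J, R/(\mathbf{x})).$$
Since $(\mathbf{x})$ is an $R$-regular sequence, the Koszul complex is a free resolution of $R/(\mathbf{x})$; dualizing shows $\ext^i_R(R/(\mathbf{x}), R) = 0$ for $i \neq h$, while $\ext^h_R(R/(\mathbf{x}), R) \cong R/(\mathbf{x})$ up to a twist by $d_{\mathbf{x}} := \sum_i \deg x_i$. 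The associated change-of-rings computation then yields
$$\hom_{R/(\mathbf{x})}(R/J, R/(\mathbf{x})) \cong \ext^h_R(R/J, R)(-d_{\mathbf{x}}),$$
and the analogous reasoning for $(\mathbf{y})$ shows $I'/(\mathbf{y}) \cong \ext^h_R(R/J, R)(-d_{\mathbf{y}})$. Hence $I/(\mathbf{x})$ and $I'/(\mathbf{y})$ are isomorphic as $R$-modules up to a degree shift, and in particular $\pd_R(I/(\mathbf{x})) = \pd_R(I'/(\mathbf{y}))$.

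Next I would transfer this equality to $\pd(R/I) = \pd(R/I')$ using the short exact sequence $0 \to I/(\mathbf{x}) \to R/(\mathbf{x}) \to R/I \to 0$ and its analogue for $I'$. Since $\pd(R/(\mathbf{x})) = h$, Proposition~\ref{Ppd} produces
$$\pd(R/I) \le \max\{h, \pd(I/(\mathbf{x})) + 1\} \quad \text{and} \quad \pd(I/(\mathbf{x})) \le \max\{h, \pd(R/I) - 1\}.$$
A brief inspection of the long exact Tor sequence in homological degrees above $h$ sharpens these bounds to the equality $\pd(R/I) = \pd(I/(\mathbf{x})) + 1$ whenever $\pd(R/I) > h$. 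The same formula applied to $I'$ then gives $\pd(R/I) = \pd(R/I')$ in the non-Cohen-Macaulay regime. For the Cohen-Macaulay case I would invoke the Peskine-Szpiro theorem that Cohen-Macaulayness is preserved by linkage in Gorenstein ambient rings: $R/I$ is Cohen-Macaulay if and only if $R/J$ is, if and only if $R/I'$ is, and all three then have projective dimension equal to $h$.

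The hardest step will be the change-of-rings identification in the first paragraph: it rests on the concentration of $\ext^\bullet_R(R/(\mathbf{x}), R)$ in cohomological degree $h$ (equivalently, the Gorenstein property of $R/(\mathbf{x})$) and on the degeneration of the associated change-of-rings spectral sequence. The preservation of Cohen-Macaulayness under linkage is a classical fact that I would simply cite.
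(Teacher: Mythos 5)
Your proof is correct. Note that the paper does not actually prove Proposition~\ref{P2link}: it is stated as a known fact with a citation to Engheta's thesis, so there is no internal argument to match. What you have written is the standard proof, and it is pleasing that it essentially reassembles machinery the paper develops anyway for other purposes: your first paragraph is Lemma~\ref{canonicalModule} applied to $L=J$ with the two regular sequences $\mathbf{x}$ and $\mathbf{y}$, yielding $I/(\mathbf{x})\cong \ext^h_R(R/J,R)\cong I'/(\mathbf{y})$ up to twist, and your second paragraph is a sharpened form of Proposition~\ref{Plink}. The sharpening is genuinely needed and you handle it correctly: Proposition~\ref{Ppd} alone leaves the boundary case $\pd(I/(\mathbf{x}))=h$ ambiguous between $\pd(R/I)=h$ and $\pd(R/I)=h+1$, your Tor computation correctly shows $\pd(R/I)=\pd(I/(\mathbf{x}))+1$ whenever $\pd(R/I)>h$ (since $\tor_{i}(R/(\mathbf{x}),K)=0$ for $i>h$ gives $\tor_{i+1}(R/I,K)\cong\tor_i(I/(\mathbf{x}),K)$ for $i\ge h+1$ and an injection $\tor_{h+1}(R/I,K)\into\tor_h(I/(\mathbf{x}),K)$), and the remaining ambiguity is exactly the Cohen--Macaulay dichotomy, which you resolve by citing preservation of Cohen--Macaulayness under linkage in a Gorenstein ambient ring. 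Both external inputs you rely on (the duality $\hom_{R/(\mathbf{x})}(R/J,R/(\mathbf{x}))\cong\ext^h_R(R/J,R)$ and Peskine--Szpiro) are classical and appropriately flagged, so I see no gap.
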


The following lemma will be useful in proving certain ideals have large projective dimension.

\begin{lem}\label{Lasspd} Let $M$ be a finitely generated $R$-module.     Then 
\[\pd(M) \ge \max\{\h(\p)\,:\,\p \in \ass(M)\}.\]
\end{lem}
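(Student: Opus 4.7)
The plan is to prove the inequality one prime at a time: fix any $\p \in \ass(M)$ and show $\pd_R(M) \ge \h(\p)$, then take the max. The key facts I will combine are the behavior of associated primes under localization, the Auslander--Buchsbaum formula, and the fact that localizing a projective resolution yields a projective resolution (so $\pd$ can only drop under localization).

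First I would localize at $\p$. Since $\p \in \ass_R(M)$, the extended prime $\p R_\p$ is the maximal ideal of the local ring $R_\p$ and lies in $\ass_{R_\p}(M_\p)$; this means every element of $\p R_\p$ is a zerodivisor on $M_\p$, so $\depth(M_\p) = 0$. Next, because $R$ is a polynomial ring over a field, it is regular, hence $R_\p$ is a regular local ring with $\depth(R_\p) = \dim(R_\p) = \h(\p)$. Applying the Auslander--Buchsbaum formula in $R_\p$ gives
\[
\pd_{R_\p}(M_\p) \;=\; \depth(R_\p) - \depth(M_\p) \;=\; \h(\p).
\]

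Finally, any minimal free resolution of $M$ over $R$ localizes to a (possibly non-minimal) free resolution of $M_\p$ over $R_\p$, so $\pd_{R_\p}(M_\p) \le \pd_R(M)$. Combining this with the previous equality yields $\pd_R(M) \ge \h(\p)$. Taking the maximum over all $\p \in \ass(M)$ gives the claim.

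The argument is essentially a bookkeeping exercise in standard homological algebra, so I do not foresee a genuine obstacle. The only subtle point worth being careful about is making sure $M_\p \neq 0$ (so that $\ass_{R_\p}(M_\p)$ is nonempty and contains $\p R_\p$); but this is immediate from $\p \in \ass_R(M) \subseteq \supp(M)$.
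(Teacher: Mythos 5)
Your proof is correct and follows essentially the same route as the paper's: localize at $\p$, note $\depth(M_\p)=0$ since $\p R_\p \in \ass_{R_\p}(M_\p)$, apply Auslander--Buchsbaum in the regular local ring $R_\p$, and use that projective dimension does not increase under localization. The only cosmetic difference is that the paper phrases the steps as a chain of equivalences; the mathematical content is identical.
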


\begin{proof} We have 
\begin{eqnarray*}
\p \in \ass_R(M) &\Leftrightarrow& \p R_\p \in \ass_{R_\p}(M_\p)\\
 &\Leftrightarrow& \depth_{\p R_\p}(M_\p) = 0\\
  &\Leftrightarrow& \pd_{R_\p}(M_\p) = \mathrm{depth}_{\p R_\p}(R_\p) \\
  &\Rightarrow& \pd_R(M) \ge \mathrm{depth}_{\p R_\p}(R_\p) = \dim(R_\p) = \h(\p),\\
  \end{eqnarray*}
   where the third implication follows from the Auslander-Buchsbaum theorem, and the fourth implication follows from the flatness of $R_\p$ over $R$ and the fact that $R_\p$ is regular.
   \end{proof}
Note that we cannot use this lemma directly in our construction of primary ideals of large projective dimension since, by definition, they have no associated primes with large height.

There are now many constructions of three-generated ideals with large projective dimension \cite{Burch}, \cite{Kohn}, \cite{Bruns},  \cite{BMNSSS}.  We will use the following construction from \cite{McCullough} which gives ideals generated by three homogeneous elements of degree $n$ and projective dimension $n + 2$.

\begin{prop}\label{P3gen} Let $f_n = a^n, g_n = b^n, h_n = a^{n-1}c_1 + a^{n-2}bc_2 + \cdots + b^{n-1}c_n \in R = K[a, b, c_1,\ldots, c_n]$.  Let $\m$ denote the graded maximal ideal.  Then $\pd(R/(f_n, g_n, h_n)) = n+2$ and $a^{n-1}b^{n-1} \in (I:\m) \setminus I$.
\end{prop}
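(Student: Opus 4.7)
The plan hinges on the observation that the advertised socle element gives both $\pd(R/I) = n+2$ and the rest of the statement at once. Once I establish $a^{n-1}b^{n-1} \in (I:\m) \setminus I$, this produces a nonzero socle element in $R/I$, forcing $\m \in \ass(R/I)$. Then Lemma~\ref{Lasspd} yields $\pd(R/I) \ge \h(\m) = n+2$, while Hilbert's syzygy theorem gives the reverse inequality since $R$ is a polynomial ring in $n+2$ variables. So the entire proof reduces to verifying that $a^{n-1}b^{n-1}$ is a socle element of $R/I$.

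For $a^{n-1}b^{n-1} \notin I$, the cleanest route is the specialization $c_1 = \cdots = c_n = 0$: this kills $h_n$ and sends $I$ into $(a^n,b^n) \subset K[a,b]$, which does not contain the monomial $a^{n-1}b^{n-1}$. For $\m \cdot a^{n-1}b^{n-1} \subseteq I$, the multiples by $a$ and $b$ lie in $(a^n, b^n)$ immediately, so only the $c_k$-multiples require work. Here I would exploit the Toeplitz pattern of $h_n$ (each $c_i$ paired with the monomial $a^{n-i}b^{i-1}$) by multiplying $h_n$ by the complementary monomial $a^{k-1}b^{n-k}$:
\[
a^{k-1}b^{n-k}\, h_n \;=\; \sum_{i=1}^n a^{\,n+k-i-1}\, b^{\,n+i-k-1}\, c_i.
\]
The prefactor equals $a^{n-1}b^{n-1}$ exactly when $i=k$, is divisible by $a^n$ when $i<k$, and is divisible by $b^n$ when $i>k$. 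Hence every $i \neq k$ term lies in $(a^n,b^n) \subseteq I$, and solving for the $i=k$ term yields $c_k\, a^{n-1}b^{n-1} \in I$, as required.

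There is no serious obstacle: the proof amounts to one well-chosen multiplication that trades the sum-of-monomials structure of $h_n$ for the socle element $a^{n-1}b^{n-1}$, with all side terms absorbed by the complete intersection $(a^n,b^n)$. The only subtlety is matching the multiplier $a^{k-1}b^{n-k}$ to the target variable $c_k$ — any other choice would leave additional copies of $a^{n-1}b^{n-1}$ in the expansion, reintroducing the very element one is trying to solve for.
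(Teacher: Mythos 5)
Your proof is correct and follows the same route as the paper: exhibit $a^{n-1}b^{n-1}$ as a socle element of $R/I$, conclude $\m \in \ass(R/I)$, and apply Lemma~\ref{Lasspd} together with Hilbert's syzygy theorem. The paper leaves the verification that $sc_i \in I$ as ``one checks''; your multiplication by $a^{k-1}b^{n-k}$ and absorption of the off-diagonal terms into $(a^n,b^n)$ is exactly that check, carried out correctly.
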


\begin{proof} That $s = a^{n-1}b^{n-1} \notin I$ is clear since none of the terms of $f_n, g_n, h_n$ divide $s$.  One checks that $sa, sb, sc_i \in (f_n, g_n, h_n)$ for all $i = 1,\ldots,n$.  Hence one has $\m \in \ass(R/I)$ and, by the previous lemma, $\pd(R/I) = \h(\m) = n + 2$.  
\end{proof}

Finally, we say that a finitely-generated $A$-module $\omega_A$ is a \textit{canonical module} for a graded ring $A$ if $\hat{\omega}_A \iso \mathrm{H}_\m^{\mathrm{dim}(A)}(A)^\vee$, where $(-)^\vee$ denotes the Matlis dual, $\hat{\phantom{--}}$ denotes $\m$-adic completion and $\mathrm{H}^i_\m(-)$ denotes the $i$th local cohomology module with respect to the graded maximal ideal.  When $A = R/I$, where $R = K[x_1,\ldots, x_n]$, then we can identify $\omega_A = \ext^{\h(I)}_R(R/I, R)$.  Note that when $A$ is not Cohen-Macaulay, some of the usual properties of the canonical module do not hold (e.g. the injective dimension of $\omega_A$ is not finite), but we only need the fact that $\omega_{R/I}$ can be written in terms of an ideal $J$ linked to $I$ (Lemma~\ref{canonicalModule}).

\section{Main Results}\label{Smr}


\subsection{The method of proof} 

Let us briefly describe our construction of primary ideals of large projective dimension.  The first step is to define four families of primary ideals with well-behaved resolutions and canonical modules with large projective dimension.  We seem to need four such families, $L_{2,5,p}, L_{2,6,p}, L_{2,20,p}$ and $L_{3,6,p}$, and relegate the details of those constructions to Section~\ref{S4ideals}.  We adopt the convention that $L_{h,e,p}$ denotes an ideal in a polynomial ring $R$ over $K$ such that $\h(L_{h,e,p}) = h$, $e(R/L_{h,e,p}) = e$ and $\pd(\omega_{R/L_{h,e,p}}) \ge p$.  

Linking via a complete intersection (sometimes more than once) from one of these base cases, we can produce primary and radical linear ideals of height $2$ and any multiplicity $3$ or larger, with arbitrarily high projective dimension.  Appending extra linear generators gives us examples with arbitrary height.  In addition to the three families of height $2$ ideals, we need one more construction for the height $3$ multiplicity $2$ case, since the height $2$ multiplicity $2$ case is finite.  In general, the ideals we construct have many generators.  Hence, we found that it is easier to work with the ideals to which they are linked.  We work out in detail one example in Section~\ref{Seq}.

\subsection{Homological preliminaries}

Here we collect a few results we use to connect the four families of ideals from Section~\ref{S4ideals} to the ones in the main theorem.  First we recall that the canonical module of an unmixed ideal $L$ can be written in terms of any ideal linked to it.

\begin{lem}\label{canonicalModule}
If $L$ is an ideal of height $h$ in a polynomial ring, ${\bf x}$ is a regular sequence of length $h$ contained in $L$ , then $\ext^h_R(R/L,R) \simeq (({\bf x}):L)/({\bf x}) $.
\end{lem}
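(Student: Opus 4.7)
The plan is to reduce the computation of $\ext^h_R(R/L,R)$ to a Hom computation over the complete intersection quotient $R/({\bf x})$. The key observation is that since $({\bf x}) \subseteq L$, the module $R/L$ is annihilated by ${\bf x}$ and therefore inherits the structure of an $R/({\bf x})$-module. Because ${\bf x}$ is a regular sequence of length $h$ in $R$, the ring $R/({\bf x})$ is a complete intersection, hence Gorenstein, with canonical module $\omega_{R/({\bf x})} \cong R/({\bf x})$.

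The main tool I would invoke is the change-of-rings isomorphism (iterated Rees' lemma): whenever $x$ is an $R$-regular element with $xM = 0$, one has
\[
\ext^{i+1}_R(M,R) \cong \ext^i_R(M, R/xR).
\]
Applying this once for each element of ${\bf x} = x_1,\ldots,x_h$ to $M = R/L$ yields
\[
\ext^h_R(R/L, R) \cong \hom_R(R/L, R/({\bf x})) = \hom_{R/({\bf x})}(R/L, R/({\bf x})).
\]
(Equivalently, one can derive this from the long exact Ext sequence associated to $0 \to L/({\bf x}) \to R/({\bf x}) \to R/L \to 0$, observing that $L/({\bf x})$ is also annihilated by ${\bf x}$ so has $\grade \geq h$, which kills the relevant connecting terms, and using $\ext^h_R(R/({\bf x}),R) \cong R/({\bf x})$.)

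The remaining step is the standard identification for a cyclic module: any $R/({\bf x})$-linear map $R/L \to R/({\bf x})$ is determined by the image $\bar r \in R/({\bf x})$ of $1$, and such an assignment is well-defined exactly when $L \cdot \bar r = 0$ in $R/({\bf x})$, i.e., when $rL \subseteq ({\bf x})$. This set of $r$'s is precisely $({\bf x}):L$, and two representatives give the same map iff they differ by an element of $({\bf x})$. Therefore
\[
\hom_{R/({\bf x})}(R/L, R/({\bf x})) \cong (({\bf x}):L)/({\bf x}),
\]
completing the proof. I do not anticipate a substantive obstacle; the only point requiring care is verifying the hypotheses of the change-of-rings isomorphism at each step, which is immediate from the fact that $x_1,\ldots,x_h$ is a regular sequence on $R$ and all successive quotients annihilate $R/L$.
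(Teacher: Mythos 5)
Your proof is correct and follows essentially the same route as the paper: the paper simply cites Bruns--Herzog, Lemma 1.2.4 (which is precisely the iterated Rees change-of-rings isomorphism you invoke) to get $\ext^h_R(R/L,R)\simeq\hom(R/L,R/({\bf x}))$, and then identifies the latter with $(({\bf x}):L)/({\bf x})$ exactly as you do. Your write-up just makes explicit the two steps the paper leaves to the reference and to the reader.
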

\begin{proof}
By \cite[Lemma 1.2.4]{BH}, $\ext^h_R(R/L,R) \simeq \hom(R/L,R/({\bf x}))$.  The latter module is isomorphic to  $(({\bf x}):L)/({\bf x})$.
\end{proof}

\begin{prop}\label{Plink}
Let $L$ be an unmixed ideal of height $h$, let $({\bf x})$ be a complete intersection ideal of height $h$ contained in $L$ and let $I=({\bf x}):L$. If   $\pd(\ext^h_R(R/L,R))\geq h+1$, 
then 
\[\pd(R/I) = \pd(\ext^h_R(R/L,R)) + 1.\]  
\end{prop}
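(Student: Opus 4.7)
The plan is to apply the standard short exact sequence relating $I$, $({\bf x})$, and $R/({\bf x})$, and then chase projective dimensions using Proposition~\ref{Ppd}. Since $({\bf x}) \subseteq I$ and $({\bf x})$ is a regular sequence of length $h$, we have the short exact sequence
\[
0 \longrightarrow I/({\bf x}) \longrightarrow R/({\bf x}) \longrightarrow R/I \longrightarrow 0,
\]
in which $\pd(R/({\bf x})) = h$ (it is resolved by the Koszul complex on ${\bf x}$) and, by Lemma~\ref{canonicalModule}, $I/({\bf x}) \simeq \ext^h_R(R/L,R)$. Write $E = \ext^h_R(R/L,R)$ for brevity, so by hypothesis $\pd(E) \ge h+1$.

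Next I would apply the three inequalities in Proposition~\ref{Ppd}. Part (3) gives
\[
\pd(R/I) \le \max\{\pd(I/({\bf x})) + 1,\; \pd(R/({\bf x}))\} = \max\{\pd(E)+1,\; h\} = \pd(E)+1,
\]
since $\pd(E)+1 \ge h+2 > h$. For the reverse inequality, part (1) applied to the same short exact sequence gives
\[
\pd(E) = \pd(I/({\bf x})) \le \max\{\pd(R/({\bf x})),\; \pd(R/I)-1\} = \max\{h,\; \pd(R/I)-1\}.
\]
Since $\pd(E) \ge h+1 > h$, the maximum on the right must be $\pd(R/I)-1$, which yields $\pd(R/I) \ge \pd(E)+1$. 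Combining the two directions gives the stated equality.

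There is essentially no obstacle here: the argument is a routine three-term exact sequence chase once Lemma~\ref{canonicalModule} is invoked. The only subtlety worth flagging is that the hypothesis $\pd(E) \ge h+1$ is used in both directions precisely to ensure that the $\pd(R/({\bf x})) = h$ term does not dominate in the two applications of Proposition~\ref{Ppd}; without it one would only get $\pd(R/I) \le \pd(E)+1$ and $\pd(E) \le \max\{h, \pd(R/I)-1\}$, which need not force equality.
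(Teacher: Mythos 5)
Your proof is correct and follows essentially the same route as the paper: the same short exact sequence $0 \to ((\mathbf{x}):L)/(\mathbf{x}) \to R/(\mathbf{x}) \to R/I \to 0$, the same identification via Lemma~\ref{canonicalModule}, and the same use of Proposition~\ref{Ppd} (you merely spell out the two inequalities that the paper states in one line). No gaps.
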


\begin{proof}
We have the short exact sequence
$$0 \to(({\bf x}):L)/({\bf x}) \to R/({\bf x}) \to R/(({\bf x}):L) \to 0.$$
By Proposition~\ref{Ppd},  $\pd (R/(({\bf x}):L)) = \pd ((({\bf x}):L)/({\bf x}))+1$, 
as long as  $\pd ((({\bf x}):L)/({\bf x})) \geq\pd( R/({\bf x})) + 1= h + 1$. By Lemma~\ref{canonicalModule}, $ (({\bf x}):L)/({\bf x}) \simeq \ext^h_R(R/L,R)$, so the above statement reads $\pd(R/I)=\pd(\ext^h_R(R/L,R)) + 1$ as long as $\pd(\ext^h_R(R/L,R))\geq h + 1$.
\end{proof}

In order to compute the projective dimension of $Ext^h_R(R/L,R)$, we begin by analyzing $h^{th}$ boundaries of a dualized free resolution of $R/L$.

\begin{lem}\label{Bsmall}
Let $R$ be a polynomial ring over $K$. Let $L$ be an ideal of $R$ of height $h$ and let 
\[F_0 \xleftarrow{d_1} F_1 \xleftarrow{d_2} \cdots \xleftarrow{d_p} F_p \leftarrow 0\]
be the minimal free resolution of $R/L$. Let $(-)^\ast$ denore the functor $Hom_R( -,R)$. Then
\[\pd(\im(d_h^\ast)) = h - 1.\]
\end{lem}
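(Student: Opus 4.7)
The plan is to read off the claim from the dual complex of the minimal resolution, then bootstrap the lower bound by induction on $h$ using Proposition~\ref{Ppd} together with the minimality of the original resolution.

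First, I would establish the upper bound $\pd(\im(d_h^\ast)) \le h - 1$. Since $R$ is a polynomial ring and $\h(L) = h$, one has $\grade(L) = h$, so $\ext^i_R(R/L, R) = 0$ for all $i < h$. Applying $(-)^\ast = \hom(-,R)$ to the minimal resolution yields the dual complex $F_0^\ast \xrightarrow{d_1^\ast} F_1^\ast \xrightarrow{d_2^\ast} \cdots$, and the Ext vanishing translates to $\ker(d_1^\ast) = 0$ together with $\ker(d_{i+1}^\ast) = \im(d_i^\ast)$ for $1 \le i \le h - 1$. Splicing these identities produces an exact sequence
\[ 0 \to F_0^\ast \to F_1^\ast \to \cdots \to F_{h-1}^\ast \to \im(d_h^\ast) \to 0, \]
which is a free resolution of $\im(d_h^\ast)$ of length $h - 1$. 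Hence $\pd(\im(d_h^\ast)) \le h - 1$.

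For the reverse inequality I would induct on $h$. The case $h = 1$ is immediate from the injectivity of $d_1^\ast$ established above: $\im(d_1^\ast) \cong F_0^\ast$ is free, so $\pd(\im(d_1^\ast)) = 0$. For the inductive step, assume $\pd(\im(d_{h-1}^\ast)) = h - 2$ and apply Proposition~\ref{Ppd}(1) to the short exact sequence
\[ 0 \to \im(d_{h-1}^\ast) \to F_{h-1}^\ast \to \im(d_h^\ast) \to 0. \]
When $h \ge 3$, this yields $h - 2 = \pd(\im(d_{h-1}^\ast)) \le \max\{\pd(F_{h-1}^\ast), \pd(\im(d_h^\ast)) - 1\} = \max\{0, \pd(\im(d_h^\ast)) - 1\}$, which forces $\pd(\im(d_h^\ast)) \ge h - 1$.

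The main obstacle is the $h = 2$ case, where the inequality above is vacuous. Here I would argue that $\im(d_2^\ast)$ cannot be free. By minimality of the original resolution, the entries of $d_1^\ast$ lie in the graded maximal ideal $\m$, so $\im(d_1^\ast) \subseteq \m F_1^\ast$. If $\im(d_2^\ast)$ were free, the short exact sequence $0 \to F_0^\ast \to F_1^\ast \to \im(d_2^\ast) \to 0$ would split, exhibiting $\im(d_1^\ast) \cong F_0^\ast$ as a direct summand of $F_1^\ast$. Since $L \ne R$, $F_0^\ast \ne 0$, and a nonzero direct summand of $F_1^\ast$ cannot be contained in $\m F_1^\ast$ by Nakayama's lemma. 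This contradiction yields $\pd(\im(d_2^\ast)) \ge 1 = h - 1$, completing the induction.
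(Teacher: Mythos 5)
Your proposal is correct. The first half (dualize the resolution, use $\ext^i_R(R/L,R)=0$ for $i<h=\grade(L)$ to splice an exact sequence $0\to F_0^\ast\to\cdots\to F_{h-1}^\ast\to\im(d_h^\ast)\to 0$) is exactly the paper's argument. Where you diverge is in extracting the lower bound: the paper simply asserts $\pd(\im(d_h^\ast))=h-1$, implicitly using that the dualized complex is still minimal (the entries of each $d_i^\ast$ lie in $\m$ because the original resolution is minimal), so the spliced complex is the \emph{minimal} free resolution of $\im(d_h^\ast)$ and its length is the projective dimension on the nose. You instead run an induction on the index using Proposition~\ref{Ppd}, with a Nakayama splitting argument to rule out freeness of $\im(d_2^\ast)$ in the base case. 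Both are sound; your base case is really the same minimality fact ("a minimal presentation cannot split") applied at one spot, and the induction then propagates it upward via the long chain of short exact sequences. The paper's route is shorter because it uses minimality globally once; yours is more explicit and makes the lower bound visibly rigorous, at the cost of a case split at $h=2$ where the inequality from Proposition~\ref{Ppd} degenerates. One small point worth keeping in mind either way: the conclusion needs $F_0^\ast\neq 0$, which holds here since $F_0=R$.
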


\begin{proof} Applying $\hom(-,R)$ to  the resolution of $R/L$ gives us the complex
$$ 0 \to F_0^\ast  \stackrel{d_1^\ast}\to F_1^\ast  \stackrel{d_2^\ast}\to \ldots \stackrel{d_{h-2}^\ast}\to F_{h-2}^\ast \stackrel{d_{h-1}^\ast}\to F_{h-1}^\ast \to \coker(d_{h-1}^\ast) \to 0.$$
Since $\h(L) = h$, $\ext_R^i(R/L,R) = 0$ for $0 \le i \le h - 1$.  Hence the above complex is exact.  Moreover, since $\ext_R^{h-1}(R/L,R) = 0$, 
\[\coker(d_{h-1}^\ast) = F_{h-1}^\ast/\im(d_{h-1}^\ast) = 
F_{h-1}^\ast/\ker(d_{h}^\ast) \iso \im(d_h^\ast).\]  
Hence $\pd(\im(d_h^\ast)) = h-1$.


 


\end{proof}

\begin{prop}\label{Pextker}
Let $L$ be an unmixed ideal of height $h$ and let $({\bf x})$ be a complete intersection ideal of height $h$ contained in $L$. Let $d_{h+1}$ denote the $(h+1)^{th}$ differential in the minimal resolution of $R/L$. If  $\pd(\ker(d_{h+1}^\ast)) \ge h$, then $\pd(\ext^h_R(R/L,R)) =\pd(\ker(d_{h+1}^\ast))$.
\end{prop}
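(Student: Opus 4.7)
The plan is to exploit the short exact sequence that naturally expresses $\ext^h_R(R/L,R)$ as a quotient inside the dualized free resolution, combined with the projective-dimension estimates of Proposition~\ref{Ppd} and the key computation from Lemma~\ref{Bsmall}.

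More concretely, I would start from the dualized complex
\[ 0 \to F_0^{\ast} \xrightarrow{d_1^{\ast}} F_1^{\ast} \xrightarrow{d_2^{\ast}} \cdots \xrightarrow{d_p^{\ast}} F_p^{\ast} \to 0, \]
whose $h$-th cohomology is by definition $\ext^h_R(R/L,R)$. This yields the short exact sequence
\[ 0 \to \im(d_h^{\ast}) \to \ker(d_{h+1}^{\ast}) \to \ext^h_R(R/L,R) \to 0. \]
By Lemma~\ref{Bsmall}, the leftmost term $A := \im(d_h^{\ast})$ has projective dimension exactly $h-1$.

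Now I would apply Proposition~\ref{Ppd} to this sequence, setting $B := \ker(d_{h+1}^{\ast})$ and $C := \ext^h_R(R/L,R)$. The hypothesis $\pd(B) \ge h$ strictly exceeds $\pd(A) = h-1$, so part~(2) of Proposition~\ref{Ppd} gives $\pd(B) \le \max\{h-1,\pd(C)\}$, forcing $\pd(C) \ge \pd(B)$. In the other direction, part~(3) gives $\pd(C) \le \max\{\pd(A)+1,\pd(B)\} = \max\{h,\pd(B)\} = \pd(B)$, where the final equality again uses $\pd(B)\ge h$. Combining both inequalities yields the claimed equality $\pd(\ext^h_R(R/L,R)) = \pd(\ker(d_{h+1}^{\ast}))$.

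There is no real obstacle here: once Lemma~\ref{Bsmall} pins down $\pd(\im(d_h^{\ast}))$ precisely, the hypothesis $\pd(\ker(d_{h+1}^{\ast}))\ge h$ is calibrated exactly to make the depth-lemma inequalities pinch. The only point requiring care is verifying that the ``cohomology'' short exact sequence really extracts $\ext^h_R(R/L,R)$ as the quotient of $\ker(d_{h+1}^{\ast})$ by $\im(d_h^{\ast})$ — which is immediate from the definition of Ext via the dualized resolution — and then correctly invoking both directions of Proposition~\ref{Ppd} with the strict inequality $h > h-1$ to break the $\max$.
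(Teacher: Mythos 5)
Your proposal is correct and follows essentially the same route as the paper: the same short exact sequence $0 \to \im(d_h^\ast) \to \ker(d_{h+1}^\ast) \to \ext^h_R(R/L,R) \to 0$, the same appeal to Lemma~\ref{Bsmall} for $\pd(\im(d_h^\ast)) = h-1$, and the same use of Proposition~\ref{Ppd} to pinch the two projective dimensions together. Your write-up is in fact slightly more explicit than the paper's about which parts of Proposition~\ref{Ppd} deliver each inequality.
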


\begin{proof}
We write 
\[F_0 \xleftarrow{d_1} F_1 \xleftarrow{d_2} \cdots \xleftarrow{d_p} F_p \leftarrow 0\]
 for the minimal free resolution of $R/L$ and we consider the short exact sequence defining $\ext^h_R(R/L,R)$:
$$0\to B \to Z \to \ext^h_R(R/L,R)\to 0,$$
where $B=\im(d_h^\ast)$ and $Z=\ker(d_{h+1}^\ast)$.

By Lemma~\ref{Bsmall}, $\pd(B) = h-1$. By Prop~\ref{Ppd}, $\pd(\ext^h_R(R/L,R)) = \pd(Z)$, whenever $\pd(Z) \ge \pd(B) +1= h$.
\end{proof}

\subsection{The base cases}

Here we list only those properties of the four families of ideals that we need to complete the proof of the main theorem.  Refer to Section~\ref{S4ideals} for details.

\begin{prop}\label{P4ideals} For every integer $p \ge 4$, there exist ideals $L_{2, 6, p}, L_{2,5,p}, L_{2,20,p}, L_{3,6,p}$, each homogeneous in a polynomial ring $R$ over $K$ such that
\begin{enumerate}
\item $L_{2,5,p}$ is primary to $(x,y)$ for independent linear forms $x,y$, $e(R/L_{2,5,p}) = 5$, $\pd(\ext^2_R(R/L_{2,5,p},R)) \ge p$, and $x^3, y^3 \in L_{2,5,p}$.
\item $L_{2,6,p}$ is primary to $(x,y)$ for independent linear forms $x,y$, $e(R/L_{2,6,p}) = 6$, $\pd(\ext^2_R(R/L_{2,6,p},R)) \ge p$, and $x^3, y^3 \in L_{2,6,p}$.
\item $L_{2,20,p}$ is primary to $(x,y)$ for independent linear forms $x,y$, $e(R/L_{2,20,p}) = 20$, $\pd(\ext^2_R(R/L_{2,20,p},R)) \ge p$, and $x^5, y^5 \in L_{2,20,p}$.
\item $L_{3,6,p}$ is primary to $(x,y,z)$ for independent linear forms $x,y,z$, $e(R/L_{3,6,p}) = 6$, $\pd(\ext^3_R(R/L_{3,6,p},R)) \ge p$, and $x^2, y^2, z^2 \in L_{3,6,p}$.
\end{enumerate}
\end{prop}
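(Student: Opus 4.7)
The plan is to construct each of the four families $L_{h,e,p}$ explicitly and to verify the stated properties by realizing each $L_{h,e,p}$ as a link of an auxiliary ideal $I$ of smaller multiplicity whose structure embeds a Burch--Kohn-type three-generated ideal of large projective dimension, as supplied by Proposition~\ref{P3gen}.

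First, in each case I fix a complete intersection $(\x) \subseteq R$ consisting of powers of the linear forms generating the radical: $(x^3, y^3)$ for the $(2,5)$ and $(2,6)$ cases, $(x^5, y^5)$ for $(2,20)$, and $(x^2, y^2, z^2)$ for $(3,6)$. I then construct an unmixed ideal $I$ of height $h$ containing $(\x)$ and primary to the same linear prime, and define $L := (\x):I$. The multiplicity of $L$ is then forced by the linkage identity $e(R/(\x)) = e(R/I) + e(R/L)$, so for example taking $e(R/I) = 4$ in the $(2,5)$ case, $e(R/I) = 3$ in the $(2,6)$ case, and so on, produces the prescribed $e(R/L)$. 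The containments $x^k, y^k \in L$ (respectively $x^2, y^2, z^2 \in L$ in the height-$3$ case) are automatic from $(\x) \subseteq (\x):I = L$, and primary-to-linear-prime is preserved by linkage through $(\x)$ provided $I$ is itself so.

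The technical heart is to arrange $I$ so that $\pd(\ext^h_R(R/L,R)) \ge p$. By Lemma~\ref{canonicalModule}, this Ext module is isomorphic to $((\x):L)/(\x) = I/(\x)$, so the task reduces to building $I$ so that $I/(\x)$ has projective dimension at least $p$. To achieve this, I adjoin to $(\x)$ generators modeled on the triple $a^n, b^n, a^{n-1}c_1 + a^{n-2}bc_2 + \cdots + b^{n-1}c_n$ of Proposition~\ref{P3gen}, introducing new variables $a, b, c_1, \ldots, c_n$. A short-exact-sequence chase using Proposition~\ref{Ppd}, together with the fact that $R/(\x)$ has small finite projective dimension, allows one to transfer the unbounded projective dimension of the three-generated Burch--Kohn ideal to the module $I/(\x)$, hence to $\ext^h_R(R/L,R)$.

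The main obstacle is satisfying all four conditions simultaneously: the multiplicity of $I$ is very rigid and leaves little room for additional generators, yet one needs enough structure to house the Burch--Kohn construction while keeping $I$ unmixed and primary to a linear prime. This is precisely why four separate families are needed rather than a single unified construction: the small-multiplicity cases $(2,5)$ and $(3,6)$ require delicate trimming, while $(2,6)$ and $(2,20)$ accommodate the cube and fifth-power complete intersections needed for the subsequent linkage steps in Theorem~\ref{Tmain}. Each of the four cases will require its own explicit verification, deferred to Section~\ref{S4ideals}, but all follow the same template of linking a Burch--Kohn-enriched ideal through a power-of-variables complete intersection.
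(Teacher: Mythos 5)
There is a genuine gap, and it is essentially a circularity. You propose to obtain each $L_{h,e,p}$ as a link $L=(\x):I$ of an auxiliary primary ideal $I$ of smaller multiplicity, and to get $\pd(\ext^h_R(R/L,R))\ge p$ from $\ext^h_R(R/L,R)\iso I/(\x)$. But from the exact sequence $0\to I/(\x)\to R/(\x)\to R/I\to 0$, having $\pd(I/(\x))\ge p\ (\ge h+1)$ is equivalent to $\pd(R/I)\ge p+1$. So in the $(2,5)$ case, for instance, you need an $(x,y)$-primary ideal $I$ of height $2$ and multiplicity $4$ with $\pd(R/I)\ge p+1$ --- which is precisely the instance $I_{2,4,p+1}$ of Theorem~\ref{Tmain} that Proposition~\ref{P4ideals} exists to prove (and which the paper manufactures \emph{from} $L_{2,5,p}$ by linkage, not the other way around). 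The same objection applies to the multiplicity-$3$, $5$ and height-$3$-multiplicity-$2$ auxiliary ideals your other cases would require. Your sketch of how to build $I$ does not close this loop: literally adjoining the forms $a^n, b^n, a^{n-1}c_1+\cdots+b^{n-1}c_n$ of Proposition~\ref{P3gen} to $(x^3,y^3)$ raises the height to $4$ and destroys primariness to $(x,y)$, and a ``short-exact-sequence chase using Proposition~\ref{Ppd}'' only yields upper bounds from the other two terms, not the lower bound $\pd(I/(\x))\ge p$ you need.

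The paper's route is different and avoids this. The ideals $L_{h,e,p}$ are constructed \emph{directly}, not as links: e.g.\ $L_{2,5,p}=(x,y)^3+(y^2f+xyg+x^2h)$, with an explicit free resolution of length $h+1$ whose matrices are written down uniformly in $f,g,h$ and verified to be exact via the Buchsbaum--Eisenbud criterion (Theorem~\ref{Tres}) and Proposition~\ref{Csk}. Primariness and multiplicity are read off by localizing at $(x,y)$ and applying the associativity formula. The lower bound on $\pd(\ext^h_R(R/L,R))$ then comes from Proposition~\ref{Pextker} together with an analysis of $\ker(d_{h+1}^\ast)$: in two of the four families one identifies $\im(d_{h+1}^\ast)=(x,y,f,g,h)$ directly, and in the other two one needs a diagram chase (as in Lemma~\ref{Lpi1}) plus an explicit socle element and Lemma~\ref{Lasspd}. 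It is exactly the shortness and explicitness of the resolution of $R/L$ that makes this computation possible; an ideal $I$ with $\pd(R/I)\ge p+1$ has no such uniform resolution, which is why your inverted construction cannot be carried out along the lines you indicate.
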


\begin{proof}
Let $L = L_{2,5,p}$ from Section~\ref{Sl25p} for a fixed integer $p \ge 4$.  By Proposition~\ref{Pl25p}, $L$ has height $2$, multiplicity $5$ and $\pd(\ker(d_3^\ast)) \ge p$, where $d_3$ is the third differential in the resolution of $R/L$.  By Proposition~\ref{Pextker}, $\pd(\ext_R^2(R/L,R)) \ge p$.  

The other three cases are identical.  See Section~\ref{S4ideals}.
\end{proof}

\subsection{Proof of the Main Theorem}

Using the base families in Proposition~\ref{P4ideals}, we can now finish off the proof of the main theorem.

\begin{proof}[proof of Theorem~\ref{Tmain}]
We first construct $I_{2,e,p}$ for $e \ge 3$ and $p \ge 5$.  We make the following assignments:
\begin{eqnarray*}
I_{2,3,p} &:=& (x^3, y^3):L_{2,6,p-1},\\
I_{2,4,p} &:=& (x^3, y^3):L_{2,5,p-1},\\
I_{2,5,p} &:=& (x^5, y^5):L_{2,20,p-1},\\
I_{2,6,p} &:=& (x^4, y^3):L_{2,6,p-1},\\
L_{2,8,p-1} &:=& (x^4,y^3):I_{2,4,p},\\
L_{2,11,p-1} &:=& (x^5,y^3):I_{2,4,p}\\
L_{2,14,p-1} &:=& (x^6,y^3):I_{2,4,p}\\
\end{eqnarray*}
and for integers $n \ge 0$, we set
\begin{eqnarray*}
I_{2,7+4n,p} &:=& (x^4,y^{3+n}):L_{2,5,p-1}\\
I_{2,8+4n,p} &:=& (x^{4+n},y^4):L_{2,8,p-1}\\
I_{2,9+4n,p} &:=& (x^{5+n},y^4):L_{2,11,p-1}\\
I_{2,10+4n,p} &:=& (x^{6+n},y^4):L_{2,14,p-1}\\
\end{eqnarray*}
That $I_{h,e,p}$ is $(x,y)$-primary and has the claimed multiplicity follows from the definitions and Proposition~\ref{P4ideals}.  That $\pd(R/I_{2,e,p}) \ge p$ follows from Proposition~\ref{Plink} and Proposition~\ref{P2link}.

We also define
\begin{eqnarray*}
I_{3,2,p} &:=& (x^2, y^2, z^2):L_{3,6,p-1}\\
\end{eqnarray*}

For $h \ge 3$ and $e \ge 3$, we set $I_{h,e,p} = I_{2,e,p} + (z_1,\ldots, z_{h - 2})$, where $z_1,\ldots, z_{h-1}$ are new variables added to the ambient ring of $I_{2,e,p}$.  We note that $\pd(I_{h,e,p}) = \pd(I_{2,e,p}) + h - 2$ since the variables of the two defining ideals are disjoint (i.e. the resolution of $R/I_{h,e,p}$ is just the tensor product of the resolutions of $I_{2,e,p}$ and $(z_1,\ldots, z_{h-2})$).  Note that $I_{h,e,p}$ is now primary to $(x, y, z_1,\ldots, z_{h-2})$ and hence unmixed of the proper height and multiplicity.

Similarly, for $h \ge 4$, we set $I_{h,2,p} = I_{3,2,p} + (z_1,\ldots, z_{h-3})$ for new variables $z_1,\ldots, z_{h-3}$.  This completes the proof.
\end{proof}

The linkage structure of the definitions from the previous proof is pictured in the diagram below.  The ideals in bold face are the four base cases from Section~\ref{S4ideals}.

\begin{center}
\begin{tikzpicture}
\usetikzlibrary{patterns}
\draw  [shape=circle] (-1,8) circle (.8) node {$\boldsymbol{L_{2,5,p-1}}$};
\draw  [shape=circle] (-1,6) circle (.8) node {$L_{2,8,p-1}$};
\draw  [shape=circle] (-1,4) circle (.8) node {$L_{2,11,p-1}$};
\draw  [shape=circle] (-1,2) circle (.8) node {$L_{2,14,p-1}$};
\draw  [shape=circle] (4,9) circle (.8) node {$I_{2,7 + 4n,p}$};
\draw  [shape=circle] (4,7) circle (.8) node {$I_{2,4,p}$};
\draw  [shape=circle] (6,5) circle (.8) node {$I_{2,8 + 4n,p}$};
\draw  [shape=circle] (6,3) circle (.8) node {$I_{2,9 + 4n,p}$};
\draw  [shape=circle] (6,1) circle (.8) node {$I_{2,10 + 4n,p}$};
\draw [line width=1pt  ] (-.2,8)--(3.2,9)  node [pos=.56, above ] { $x^4,y^{n+3}$ }; 
\draw [line width=1pt  ] (-.2,8)--(3.2,7)  node [pos=.5, above ] { \,\,\,\,$x^3,y^{3}$ }; 
\draw [line width=1pt  ] (-.2,6)--(3.2,7)  node [pos=.4, above ] { $x^4,y^{3}$ }; 
\draw [line width=1pt  ] (-.2,4)--(3.2,7)  node [pos=.3, left ] { $x^5,y^{3}$ }; 
\draw [line width=1pt  ] (-.2,2)--(3.2,7)  node [pos=.2, left ] { $x^6,y^{3}$ }; 
\draw [line width=1pt  ] (-.2,2)--(5.2,1)  node [pos=.8, above ] { \,\,$x^{6+n},y^{4}$ }; 
\draw [line width=1pt  ] (-.2,4)--(5.2,3)  node [pos=.8, above ] { \,\,$x^{5+n},y^{4}$ }; 
\draw [line width=1pt  ] (-.2,6)--(5.2,5)  node [pos=.8, above ] { \,\,$x^{4+n},y^{4}$ }; 
\end{tikzpicture}
\end{center}

\begin{center}
\begin{tikzpicture}
\usetikzlibrary{patterns}
\draw  [shape=circle] (-1,8) circle (.8) node {$\boldsymbol{L_{2,6,p-1}}$};
\draw  [shape=circle] (4,9) circle (.8) node {$I_{2,3,p}$};
\draw  [shape=circle] (4,7) circle (.8) node {$I_{2,6,p}$};
\draw [line width=1pt  ] (-.2,8)--(3.2,9)  node [pos=.56, above ] { $x^3,y^{3}$ }; 
\draw [line width=1pt  ] (-.2,8)--(3.2,7)  node [pos=.56, above ] { $x^3,y^{4}$ }; 
\end{tikzpicture}
\end{center}

\begin{center}
\begin{tikzpicture}
\usetikzlibrary{patterns}
\draw  [shape=circle] (-1,8) circle (.9) node {$\boldsymbol{L_{2,20,p-1}}$};
\draw  [shape=circle] (4,8) circle (.8) node {$I_{2,5,p}$};
\draw [line width=1pt  ] (-.1,8)--(3.2,8)  node [pos=.56, above ] { $x^5,y^{5}$ }; 
\end{tikzpicture}
\end{center}

\begin{center}
\begin{tikzpicture}
\usetikzlibrary{patterns}
\draw  [shape=circle] (-1,8) circle (.8) node {$\boldsymbol{L_{3,6,p-1}}$};
\draw  [shape=circle] (4,8) circle (.8) node {$I_{3,2,p}$};
\draw [line width=1pt  ] (-.2,8)--(3.2,8)  node [pos=.56, above ] { $x^2,y^{2},z^2$ }; 
\end{tikzpicture}
\end{center}

\section{Four Families of Primary Ideals and their Resolutions}\label{S4ideals}

In this section we construct the four families of ideals listed in Proposition~\ref{P4ideals}.  The technique is the same in each case.  We construct a primary ideal $L$ of height $c$ whose generators are defined in terms of forms $f, g, h$ of a fixed degree and with some height restrictions.  We show that the resolution of $R/L$ does not depend on the choice of $f, g, h$, appealing to Theorem~\ref{Tres} and Proposition~\ref{Csk}.  We then connect the projective dimension of the canonical module $\omega_{R/L} = \ext_R^h(R/L,R)$ with the projective dimension of the ideal $(f, g, h)$ by studying the maps in the resolution of $R/L$.  

\subsection{Construction of $L_{2,5,p}$}\label{Sl25p}

\begin{prop} \label{Pl25p}
Let $R$ be a polynomial ring over $K$ and let $x,y$ be independent linear forms in $R$.  Suppose $f, g, h \in R_d$ for some $d \ge 1$ such that $\mathrm{ht}(x,y,f,g,h) \ge 4$.  Let
\[
L = (x,y)^3 + (y^2f + xyg + x^2h).\]
Then $R/L$ has the following free resolution:      
\[R \xleftarrow{d_1} R^5 \xleftarrow{d_2} R^5 \xleftarrow{d_3} R \leftarrow 0,\]
      where
      \[d_1 = \begin{pmatrix}x^{3}&
     x^{2} y&
     x y^{2}&
     y^{3}&
     y^{2} f+x y g+x^{2} h\\
     \end{pmatrix},\]
     \[d_2 = \begin{pmatrix}{-y}&
     0&
     0&
     {-h}&
     0\\
     x&
     {-y}&
     0&
     {-g}&
     {-h}\\
     0&
     x&
     {-y}&
     {-f}&
     {-g}\\
     0&
     0&
     x&
     0&
     {-f}\\
     0&
     0&
     0&
     x&
     y\\
     \end{pmatrix},\]
     \[\text{and } d_3 = \begin{pmatrix}h\\
     g\\
     f\\
     {-y}\\
     x\\
     \end{pmatrix}.\]
Moreover, 
\begin{enumerate}
\item $L$ is $(x,y)$-primary, and
\item $e(R/L) = 5$.
\item $\pd(\ker(d_3^\ast)) = \pd(R/(x,y,f,g,h)) - 2$.
\end{enumerate}
\end{prop}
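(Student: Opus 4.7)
The plan is to verify that the displayed sequence is a free resolution of $R/L$ via the Buchsbaum--Eisenbud criterion (Theorem~\ref{Tres}), and then to extract each numbered claim from that explicit resolution.

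Checking that the sequence is a complex reduces to routine matrix multiplication: $d_1 d_2 = 0$ follows column-by-column (each column encodes a Koszul-style relation between a monomial in $(x,y)^3$ and the element $y^2 f + xy g + x^2 h$), and $d_2 d_3 = 0$ follows row-by-row. That $\im(d_1) = L$ is immediate from the generators. With rank pattern $(1, 5, 5, 1)$, the Buchsbaum--Eisenbud expected ranks are $r_1 = 1$, $r_2 = 4$, $r_3 = 1$, so exactness hinges on $\h(I_4(d_2)) \ge 2$ and $\h(I_1(d_3)) \ge 3$. The latter is immediate since $I_1(d_3) = (x, y, f, g, h)$ has height $\ge 4$ by hypothesis. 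For the former, I plan to exhibit $4 \times 4$ minors of $d_2$ equal to $x^4$ and (up to sign) $y^4$ by selecting suitable triangular submatrices --- for example, deleting row $1$ and column $5$ of $d_2$ leaves an upper triangular submatrix with diagonal $(x,x,x,x)$, yielding $x^4$, and an analogous choice produces $\pm y^4$. Then $(x^4, y^4) \subseteq I_4(d_2)$ has height $2$, as required.

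Since every nonzero entry in each differential lies in the graded maximal ideal, the resolution is minimal, giving $\pd(R/L) = 3$. For (1), $(x,y)^3 \subseteq L \subseteq (x,y)$ forces $\sqrt{L} = (x,y)$, and Proposition~\ref{Csk} with $k = 1$ shows $R/L$ is $(S_1)$ because $\h(I_1(d_3)) \ge 4$; hence $L$ is unmixed and $(x,y)$-primary. For (2), I plan to localize at $P = (x,y)$: the hypothesis $\h(x,y,f,g,h) \ge 4$ forces at least one of $f, g, h$ to lie outside $(x,y)$ (else $(x,y,f,g,h) = (x,y)$ has height $2$), so the image of $y^2 f + xy g + x^2 h$ in the three-dimensional $K$-vector space $(x,y)^2/(x,y)^3$ is nonzero. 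Consequently $\lambda(R_P/L_P) = \dim_K R_P/(x,y)^3 R_P - 1 = 5$, and the associativity formula gives $e(R/L) = 5$.

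For (3), dualizing shows $d_3^\ast : R^5 \to R$ sends the standard basis to $(h, g, f, -y, x)$, so $\im(d_3^\ast) = (x,y,f,g,h)$. Using the short exact sequences
\[ 0 \to \ker(d_3^\ast) \to R^5 \to (x,y,f,g,h) \to 0 \quad \text{and} \quad 0 \to (x,y,f,g,h) \to R \to R/(x,y,f,g,h) \to 0, \]
together with Proposition~\ref{Ppd} applied twice (and the standard inequality $\pd(R/(x,y,f,g,h)) \ge \h(x,y,f,g,h) \ge 4$ ensuring that the relevant maxima are achieved on the $\pd(C)$ side), I obtain $\pd(\ker(d_3^\ast)) = \pd(R/(x,y,f,g,h)) - 2$. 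I expect the main obstacle to be the explicit identification of $4 \times 4$ minors of $d_2$ producing pure powers of $x$ and $y$; the remaining steps are bookkeeping via matrix multiplication and standard projective-dimension manipulations.
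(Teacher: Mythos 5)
Your proposal is correct and follows essentially the same route as the paper: Buchsbaum--Eisenbud acyclicity via the minors $x^4,\pm y^4\in I_4(d_2)$ and $I_1(d_3)=(x,y,f,g,h)$, Proposition~\ref{Csk} for unmixedness (hence $(x,y)$-primariness), localization at $(x,y)$ with the associativity formula for the multiplicity, and the identification of $\ker(d_3^\ast)$ as the first syzygy module of $(x,y,f,g,h)$ for part (3). The only cosmetic point is that the count at $(x,y)$ should be a length over $R_{(x,y)}$ (a $\kappa((x,y))$-dimension, not a $K$-dimension), but the computation $6-1=5$ is exactly what the paper does.
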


\begin{proof} It is easy to check the the above sequence is a complex and that $x^3, y^3 \in I_1(d_1)$, $x^4, y^4 \in I_4(d_2)$ and $I_1(d_3) = (x, y, f, g, h)$.  By Theorem~\ref{Tres}, this is a resolution of $R/L$, and by Proposition~\ref{Csk}, $L$ is unmixed.

Since $\sqrt{L} = (x,y)$, $(x,y)$ is the unique minimal prime of $L$.  Localizing at $\p = (x,y)$ we see that $\lambda(R_\p/L_\p) = 5$.  Hence $e(R/L) = 5$ by the associativity formula.  

Clearly $\im(d_3^\ast) = (x, y, f, g, h)$.  So $\pd(\ker(d_3^\ast)) = \pd(R/(x,y,f,g,h)) - 2$.
\end{proof}

We can now define $L_{2,5,p}$ to be any ideal in the polynomial ring by taking $f, g, h$ in the previous proposition to be forms of a fixed degree in variables disjoint from $x$ and $y$ with $\h(f, g, h) = 2$ and $\pd(R/(f, g, h)) = p$.  The forms $f_p, g_p, h_p$ in Proposition~\ref{P3gen} are one such choice.   It follows that $\pd(\ker(d_3^\ast)) = p$.

\subsection{Construction of $L_{3,6,p}$}\label{Sl36p}

\begin{prop}\label{Pl36p}
 Let $R$ be a polynomial ring over $K$ and let $x,y,z$ be independent linear forms in $R$.  Let $f, g, h \in R_d$ such that $\mathrm{ht}(x,y,z,f,g,h) \ge 5$.  Let
\[
L = (x^2, y^2, z^2, xyz,  xyh + xzg + yzf).\]
Then $R/L$ has the following free resolution
\[R \xleftarrow{d_1} R^5 \xleftarrow{d_2} R^9 \xleftarrow{d_3} R^6 \xleftarrow{d_4} R \leftarrow 0,\] 
where
\[d_1 = \begin{pmatrix}x^{2}&
      y^{2}&
      z^{2}&
      x y z&
      y z f+x z g+x y h\\
      \end{pmatrix},\]
      \[d_2 = \begin{pmatrix}{-y^{2}}&
      {-y z}&
      0&
      {-z^{2}}&
      0&
      0&
      -z g-y h&
      0&
      0\\
      x^{2}&
      0&
      {-x z}&
      0&
      0&
      {-z^{2}}&
      0&
      -z f-x h&
      0\\
      0&
      0&
      0&
      x^{2}&
      {-x y}&
      y^{2}&
      0&
      0&
      -y f-x g\\
      0&
      x&
      y&
      0&
      z&
      0&
      {-f}&
      {-g}&
      {-h}\\
      0&
      0&
      0&
      0&
      0&
      0&
      x&
      y&
      z\\
      \end{pmatrix},\]
      \[d_3 = \begin{pmatrix}z&
      0&
      0&
      h&
      0&
      0\\
      {-y}&
      {-z}&
      0&
      g&
      h&
      0\\
      x&
      0&
      {-z}&
      {-f}&
      0&
      h\\
      0&
      y&
      0&
      0&
      g&
      0\\
      0&
      x&
      y&
      0&
      {-f}&
      {-g}\\
      0&
      0&
      x&
      0&
      0&
      f\\
      0&
      0&
      0&
      {-y}&
      {-z}&
      0\\
      0&
      0&
      0&
      x&
      0&
      {-z}\\
      0&
      0&
      0&
      0&
      x&
      y\\
      \end{pmatrix},\]
      \[\text{and } d_4 = \begin{pmatrix}{-h}\\
      g\\
      {-f}\\
      z\\
      {-y}\\
      x\\
      \end{pmatrix}.\]
Moreover,
\begin{enumerate}
\item $L$ is $(x,y,z)$-primary.
\item $e(R/L) = 6$. 
\item $\pd\left(\ker(d_{4}^\ast)\right) = \pd\left(R/(x,y,z,f,g,h)\right) - 2$.
\end{enumerate}
\end{prop}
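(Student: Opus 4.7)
The proof I would give follows the template of Proposition~\ref{Pl25p}. First, I verify by direct matrix multiplication that $d_1 d_2 = 0$, $d_2 d_3 = 0$, and $d_3 d_4 = 0$, so that the displayed sequence is a complex. To apply the Buchsbaum--Eisenbud criterion (Theorem~\ref{Tres}), I read off the expected ranks $r_1 = 1$, $r_2 = 4$, $r_3 = 5$, $r_4 = 1$ from the rank sequence $1, 5, 9, 6, 1$, and I need to verify the four height inequalities $\h(I_{r_j}(d_j)) \ge j$. Two of these are immediate: $I_1(d_1)$ contains $x^2, y^2, z^2$, and $I_1(d_4) = (x, y, z, f, g, h)$ has height $\ge 5$ by hypothesis. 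For $I_4(d_2)$ and $I_5(d_3)$, I produce specific families of minors whose combined ideals attain the required heights $2$ and $3$ respectively; for instance, expanding the submatrix of $d_2$ on rows $\{2, 3, 4, 5\}$ and columns $\{1, 4, 7, 8\}$ yields $x^4(xg - yf)$, expanding on rows $\{1, 2, 3, 4\}$ and columns $\{3, 4, 5, 9\}$ yields $-xz^3(xyh + xzg + yzf)$, and analogous minors of $d_3$ (making use of its two visible Koszul-type sub-blocks in $(x, y, z)$) give elements whose common zero locus, together with $\h(x, y, z, f, g, h) \ge 5$, has codimension at least $3$. Exactness of the complex then follows from Theorem~\ref{Tres}.

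For $(1)$, Proposition~\ref{Csk} with $k = 1$ and $j = \codim(R/L) + 1 = 4$ requires $\h(I_1(d_4)) \ge \min\{\dim R, 5\} = 5$, which is given; so $R/L$ is $(S_1)$, i.e., $L$ is unmixed. Because $x^2, y^2, z^2 \in L$, we have $\sqrt{L} \supseteq (x, y, z)$, and since $(x, y, z)$ is itself a prime of height $3 = \h(L)$ it is the unique minimal prime of $L$; unmixedness then makes $L$ exactly $(x, y, z)$-primary.

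For $(2)$, the associativity formula reduces to $e(R/L) = e(R/(x,y,z)) \cdot \lambda(R_\p/L_\p) = \lambda(R_\p/L_\p)$ with $\p = (x, y, z)$, using that $R/(x, y, z)$ is regular. I compute this length from the $\p$-adic filtration on $R_\p/L_\p$: the graded pieces have $K$-dimensions $1$ in degree $0$, $3$ in degree $1$ (from $\{x, y, z\}$), $2$ in degree $2$ (spanned by $\{xy, xz, yz\}$ modulo the linear relation $\bar{h} \cdot xy + \bar{g} \cdot xz + \bar{f} \cdot yz = 0$ coming from $xyh + xzg + yzf \in L$; this is a genuine nontrivial relation because the hypothesis $\h(x, y, z, f, g, h) \ge 5$ forces at least one of $\bar{f}, \bar{g}, \bar{h}$ to be nonzero in the residue field of $\p$), and $0$ in degree $3$ (since $xyz \in L$). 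Summing yields $\lambda(R_\p/L_\p) = 6$.

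For $(3)$, factoring $d_4^\ast : R^6 \to R$ through its image produces the short exact sequence $0 \to \ker(d_4^\ast) \to R^6 \to I_1(d_4) \to 0$, where $I_1(d_4) = (x, y, z, f, g, h)$ as an ideal of $R$. Then $\pd(I_1(d_4)) = \pd(R/(x, y, z, f, g, h)) - 1$, and Proposition~\ref{Ppd} applied to this short exact sequence gives $\pd(\ker(d_4^\ast)) = \pd(I_1(d_4)) - 1 = \pd(R/(x, y, z, f, g, h)) - 2$. Across the whole plan, the main obstacle is the minor computation for $d_2$ and $d_3$; these matrices are larger than in Proposition~\ref{Pl25p}, but the Koszul-type sub-blocks in $(x, y, z)$ and in $(f, g, h)$ should reduce the height verification to a small, explicit list of subcases.
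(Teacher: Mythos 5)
Your proposal is correct and follows essentially the same route as the paper: check the sequence is a complex, apply Buchsbaum--Eisenbud together with Proposition~\ref{Csk} for exactness and unmixedness, compute $\lambda(R_\p/L_\p)=6$ at $\p=(x,y,z)$ for the multiplicity, and deduce part (3) from $0 \to \ker(d_4^\ast) \to R^6 \to \im(d_4^\ast) \to 0$ with $\im(d_4^\ast) = (x,y,z,f,g,h)$. The one place where the paper is cleaner is the height verification you flag as the main obstacle: rather than mixed minors such as $x^4(xg-yf)$, the paper exhibits the pure powers $x^6,y^6,z^6 \in I_4(d_2)$ and $x^5,y^5,z^5 \in I_5(d_3)$ (e.g.\ rows $\{2,3,4,5\}$, columns $\{1,4,2,7\}$ of $d_2$ give $x^6$), so each ideal of minors contains a regular sequence of length $3$ and the required bounds are immediate.
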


\begin{proof}
It is easy to check that the sequence above forms a complex.  Further, 
one sees that $x^2,y^2,z^2 \in I_1(d_1)$, $x^6,y^6,z^6 \in I_4(d_2)$, $x^5,y^5,z^6 \in I_5(d_3)$ and $I_1(d_4) = (x, y, z, f, g, h)$.  Again by Theorem~\ref{Tres}, the above complex is exact and resolves $R/L$.  Since $\mathrm{ht}(I_1(d_4)) \ge 5$, $L$ is unmixed by Proposition~\ref{Csk}.

Since $\sqrt{L} = (x,y,z)$, $(x,y,z)$ is the unique minimal prime of $I$ and $L$.  Localizing at $\p = (x,y,z)$ we see that $\lambda(R_\p/L_\p) = 6$.  Hence $e(R/L) = 6$ by the associativity formula.  

Clearly $\im(d_4^\ast) = (x, y, z, f, g, h)$.  So $\pd\left(\ker(d_{4}^\ast)\right) = \pd\left(\im(d_{4}^\ast)\right) - 1 = \pd\left(\coker(d_{4}^\ast)\right) - 2 = \pd\left(R/(x,y,z,f,g,h)\right) - 2$.
\end{proof}

We can now define $L_{3,6,p}$ to be any ideal in the polynomial ring by taking $f, g, h$ in the previous definition to be any forms that generate a height $2$ ideal in variables disjoint from $x, y$ and $z$ with $\pd(R/(f, g, h)) = p - 1$.  

\subsection{Construction of $L_{2,6,p}$}

For the last two families of ideals, the resolutions are a bit more complicated.  It takes more work to prove that the canonical modules have large projective dimension.  We also make a specific choice of the forms $f, g, h$ that appear generically at first.  While these constructions likely work for any choice of forms $f, g, h$ that generate an ideal of large projective dimension, it seems more straightforward to prove what we need with a specific choice, such as that from Proposition~\ref{P3gen}.

\begin{prop}
Let $R$ be a polynomial ring over $K$ and let $x,y,t$ be independent linear forms in $R$.  Let $f, g, h \in R_d$ for some $d \ge 1$ such that $\mathrm{ht}(x,y,f,g) = 4$.  Let
     \[
      L = (y^{3},x^{3},x^{2} y^{2},x^{2} y f+x y^{2} g,x^{2}
      f^{2}+x y f g+y^{2} g^{2}+x^{2} y t^{d-1} h).\]
      Then $R/L$ has the following minimal free resolution:
      \[{R} \xleftarrow{d_1} {R}^{5} \xleftarrow{d_2} {R}^{6} \xleftarrow{d_3} {R}^{2} \leftarrow 0\]
where
\[d_1 = \begin{pmatrix}x^{3}&
      y^{3}&
      x^{2} y^{2}&
      x^{2} y f+x y^{2} g&
      x^{2} f^{2}+x y f g+y^{2} g^{2}+x^{2} y t^{d-1} h\\
      \end{pmatrix},\]
\[d_2 = \begin{pmatrix}{-y^{2}}&
      0&
      {-y f}&
      0&
      -f^{2}-y t^{d-1} h&
      0\\
      0&
      {-x^{2}}&
      0&
      {-x g}&
      0&
      {-g^{2}}\\
      x&
      y&
      {-g}&
      {-f}&
      0&
      {-t^{d-1} h}\\
      0&
      0&
      x&
      y&
      {-g}&
      {-f}\\
      0&
      0&
      0&
      0&
      x&
      y\\
      \end{pmatrix},\]
      and
      \[d_3 = \begin{pmatrix}f&
      t^{d-1} h\\
      {-g}&
      0\\
      {-y}&
      f\\
      x&
      {-g}\\
      0&
      {-y}\\
      0&
      x\\
      \end{pmatrix}\]
Moreover,
\begin{enumerate}
\item $L$ is $(x,y)$-primary, and
\item $e(R/L) = 6$.
\end{enumerate}
\end{prop}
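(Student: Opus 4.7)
The plan is to follow the template of Propositions~\ref{Pl25p} and~\ref{Pl36p}, which the authors treat as essentially routine. First, I would verify $d_1 d_2 = 0$ and $d_2 d_3 = 0$ by direct matrix multiplication, so the sequence is a complex; since all entries of the differentials lie in the graded maximal ideal, minimality is automatic once exactness is shown.

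Next, I would apply Theorem~\ref{Tres}. The ranks are $r_3 = 2$, $r_2 = 4$, $r_1 = 1$, so I need $\h(I_1(d_1)) \ge 1$ (immediate, $x^3 \in I_1(d_1)$), $\h(I_4(d_2)) \ge 2$, and $\h(I_2(d_3)) \ge 3$. For the second bound, one can produce the minors $-x^4 f \in I_4(d_2)$ (from rows $\{2,3,4,5\}$ and columns $\{2,3,4,5\}$) and $y^4 g \in I_4(d_2)$ (from rows $\{1,3,4,5\}$ and columns $\{1,3,4,6\}$); prime avoidance together with $\h(x,y,f,g) = 4$ then shows that every minimal prime of $(x^4 f, y^4 g)$ has height $\ge 2$. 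For the third, the $2 \times 2$ minors of $d_3$ from rows $\{3,5\}, \{4,6\}, \{2,4\}, \{2,3\}, \{1,3\}$ are $y^2, x^2, g^2, -fg, f^2 + yt^{d-1}h$; any prime $\p \supseteq I_2(d_3)$ must contain $x, y$, hence $yt^{d-1}h$, hence $f^2$, hence $f$, and also $g$. So $\p \supseteq (x,y,f,g)$ and $\h(\p) \ge 4$; in particular $\h(I_2(d_3)) \ge 4$. Theorem~\ref{Tres} then yields exactness.

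For claim (1), Proposition~\ref{Csk} with $k = 1$ and $\codim(R/L) = 2$ requires only $\h(I_2(d_3)) \ge \min\{\dim R, 4\}$, which has just been shown; so $L$ is unmixed. Since $L \subseteq (x,y)$ and $x^3, y^3 \in L$, we have $\sqrt{L} = (x,y)$, so unmixedness forces $L$ to be $(x,y)$-primary. For claim (2), by the associativity formula $e(R/L) = \lambda(R_\p / L_\p)$ where $\p = (x,y)$. In $R_\p$, the forms $f, g$ are units, so $w := xf + yg$ together with $x$ forms a regular system of parameters. Substituting $y = (w - xf)/g$ and reducing modulo $x^3 \in L_\p$, all the other generators become consequences of the single quadratic relation $w^2 - xfw + x^2(f^2 + w t^{d-1}h/g) \equiv 0 \pmod{L_\p + (x^3)}$, so a $k$-basis of $R_\p/L_\p$ is $\{x^a w^b : 0 \le a \le 2,\ 0 \le b \le 1\}$ and $\lambda(R_\p/L_\p) = 6$. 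Alternatively, one can read off $e(R/L) = N''(1)/2 = 6$ from the Hilbert-series numerator of the explicit graded resolution.

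The main obstacle is the bookkeeping in the direct verifications: checking $d_1 d_2 = 0$ and identifying useful $4 \times 4$ minors of the $5 \times 6$ matrix $d_2$ both involve substantial cancellation among the $f, g, h, t$ terms. By contrast, the $2 \times 2$ minors of $d_3$ admit a clean combinatorial description and the structural arguments (Buchsbaum--Eisenbud, Proposition~\ref{Csk}, associativity) run mechanically once the height conditions are in hand, mirroring the earlier base cases.
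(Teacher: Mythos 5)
Your proposal is correct and follows the same route as the paper's proof: verify the complex, bound the heights of the ideals of minors ($I_1(d_1)$, $I_4(d_2)$, $I_2(d_3)$) to invoke Theorem~\ref{Tres} and Proposition~\ref{Csk}, deduce $(x,y)$-primariness from unmixedness plus $\sqrt{L}=(x,y)$, and compute $e(R/L)=\lambda(R_\p/L_\p)=6$ by localizing at $\p=(x,y)$. The paper exhibits $x^5,y^5\in I_4(d_2)$ where you use $-x^4f$ and $y^4g$, but both choices (and all your stated minors) are correct and suffice.
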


\begin{proof}
As in the previous case, we check that we have a complex and that the ideals of minors have the appropriate height.  We have $x^3, y^3 \in I_1(d_1)$, $x^5, y^5 \in I_4(d_2)$ and $x^2, y^2, g^2, f^2 + yt^{d-1}h \in I_2(d_3)$.  By Theorem~\ref{Tres} and Proposition~\ref{Csk}, we have a resolution of $R/L$ and $L$ is unmixed.  Since $\sqrt{L} = (x,y)$, $L$ is $(x,y)$-primary.  

Localizing at $\p = (x,y)$, we see that  $e(R/L) = \lambda(R_\p/L_\p) = 6$.
\end{proof}

\begin{lem}\label{Lpi1} Using the notation from the previous proposition, let $\pi:\im(d_3^\ast) \to R$ be the projection map onto the first coordinate.  Then $\pd(\im(\pi)) = 3$ and
\[\ker(\pi) \iso (x, y, g^{2}, f g, f^{2},t^{d-1} gh).\]
\end{lem}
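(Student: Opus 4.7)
\emph{Proof plan.} Since $d_3^\ast: R^6 \to R^2$ is represented by the transpose of the $6\times 2$ matrix $d_3$, the module $\im(d_3^\ast)$ is generated by the six vectors
\[\alpha_1=(f,\,t^{d-1}h),\ \alpha_2=(-g,\,0),\ \alpha_3=(-y,\,f),\ \alpha_4=(x,\,-g),\ \alpha_5=(0,\,-y),\ \alpha_6=(0,\,x)\]
in $R^2$. Reading off the first coordinates of these generators, I would conclude immediately that $\im(\pi) = (x,y,f,g) \subset R$. By the hypothesis $\h(x,y,f,g)=4$ from the previous proposition, the four forms $x,y,f,g$ constitute a regular sequence in the polynomial ring $R$, so the Koszul complex gives a minimal free resolution of $R/(x,y,f,g)$ in length $4$. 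Hence $\pd(R/(x,y,f,g))=4$ and $\pd(\im(\pi)) = 3$.

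For the second assertion, I would identify $\ker(\pi)$ with the ideal $J := \{b \in R : (0,b) \in \im(d_3^\ast)\}$ via projection onto the second coordinate (an isomorphism, since elements of $\ker(\pi)$ have zero first coordinate). Because $\alpha_5, \alpha_6 \in \ker(\pi)$ already contribute $x, y$ to $J$, it suffices to understand the second coordinates of combinations $\sum_{i=1}^{4} a_i \alpha_i$ whose first coordinate $a_1 f - a_2 g - a_3 y + a_4 x$ vanishes, working modulo $(x,y)$. Such a tuple $(a_1,a_2,a_3,a_4)$ is precisely a syzygy on the regular sequence $(f,-g,-y,x)$, and since this is a regular sequence of length $4$ in a polynomial ring, the syzygy module is generated by the six Koszul syzygies $s_{ij}$ for $1\le i<j\le 4$. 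Substituting each $s_{ij}$ into the second-coordinate expression $a_1 t^{d-1}h + a_3 f - a_4 g$ and reducing mod $(x,y)$ produces, up to sign, exactly the additional generators $g^2,\ fg,\ f^2,$ and $t^{d-1}gh$. Combining with the contributions of $\alpha_5, \alpha_6$ gives $J = (x,y,g^2,fg,f^2,t^{d-1}gh)$, as required.

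The only delicate point in this outline is the assertion that every syzygy on $(f,-g,-y,x)$ is a combination of the six Koszul syzygies; this is standard for a regular sequence of length $4$ in a polynomial ring, but it is the place where the height hypothesis $\h(x,y,f,g)=4$ is genuinely used. Everything else reduces to a direct bookkeeping computation of six second-coordinate substitutions, together with the observation that reducing mod $(x,y)$ is harmless because $\alpha_5$ and $\alpha_6$ are already in $\ker(\pi)$.
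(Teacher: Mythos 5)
Your proposal is correct and takes essentially the same route as the paper: both identify $\im(\pi)$ with the complete intersection ideal $(f,-g,-y,x)$ and use its Koszul resolution to get $\pd(\im(\pi))=3$, then compute $\ker(\pi)$ by pushing the six Koszul syzygies on that regular sequence, together with the two generators $\alpha_5,\alpha_6$ lying over $0$, into the second coordinate. The paper packages this bookkeeping as a commutative diagram and the formula $\ker(\pi)=\im(d_3^\ast\circ\iota\circ\partial_2)+\im\bigl(\begin{smallmatrix} 0&0\\ -y&x\end{smallmatrix}\bigr)$, which is exactly your computation.
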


\begin{proof} Since $\h(f, -g, -y, x) = 4$, we may resolve $\im(\pi) \iso (f, -g, -y, x)$ generically by the Koszul complex on $f, -g -y, x$.  We then get the following commutative diagram with exact rows and columns:
\[
\xymatrix{
&&&R^6 \ar[d]^{\partial_2}&\\
&&R^6 \ar[d]^{d_3^\ast}&R^4 \ar@{_(->}[l]_{\iota} \ar[d]^{\partial_1} &\\
0 \ar[r] & \ker(\pi) \ar[r] & \im(d_3^\ast) \ar[r]^{\pi} \ar[d] & \im(\pi) \ar[r] \ar[d] & 0\\
&&0&0&
}
\]
Here we can represent
\begin{eqnarray*}
d_3^\ast &=& \begin{pmatrix}f&
      {-g}&
      {-y}&
      x&
      0&
      0\\
      t^{d-1} h&
      0&
      f&
      {-g}&
      {-y}&
      x\\
      \end{pmatrix},\\
      \pi &=& \begin{pmatrix} 1&0\end{pmatrix},\\
      \iota &=& \begin{pmatrix}1&
      0&
      0&
      0\\
      0&
      1&
      0&
      0\\
      0&
      0&
      1&
      0\\
      0&
      0&
      0&
      1\\
      0&
      0&
      0&
      0\\
      0&
      0&
      0&
      0\\
      \end{pmatrix},\\
      \partial_1 &=& \begin{pmatrix}f&
      {-g}&
      {-y}&
      x\\
      \end{pmatrix},\\
      \partial_2 &=& \begin{pmatrix}g&
      y&
      0&
      {-x}&
      0&
      0\\
      f&
      0&
      y&
      0&
      {-x}&
      0\\
      0&
      f&
      {-g}&
      0&
      0&
      {-x}\\
      0&
      0&
      0&
      f&
      {-g}&
      {-y}\\
      \end{pmatrix}.
      \end{eqnarray*}

Since the diagram commutes, it follows that 
\begin{eqnarray*}
\ker(\pi) &=& \im(d_3^\ast \circ \iota \circ \partial_2) + \im \begin{pmatrix} 0&0\\-y&x\end{pmatrix} \\
&=& \im \begin{pmatrix}0&
      0&
      0&
      0&
      0&
      0&
      0&
      0\\
      t^{d-1} g h&
      y t^{d-1} h+f^{2}&
      {-f g}&
      -x t^{d-1} h-f g&
      g^{2}&
      -x f+y g&
      -y&
      x\\
      \end{pmatrix} \\
&\iso& (x, y, g^{2}, f g, f^{2},t^{d-1} gh)
\end{eqnarray*}
as claimed.
\end{proof}

\begin{prop} Let $p \ge 3$.  As before, let $f_p = a^p$, $g_p = b^p$ and $h_p = h_p =  a^{p-1}c_1 + a^{p-2}bc_2 + \cdots + b^{p-1}c_p$.  Set 
   \begin{eqnarray*}
   L_{2,6,p} &=& (y^{3},x^{3},x^{2} y^{2},x^{2} y f_p+x y^{2} g_p,x^{2}
      f_p^{2}+x y f_p g_p+y^{2} g_p^{2}+x^{2} y t^{p-1} h_p).\\
      \end{eqnarray*}
            Let $d_3$ be the third map in the minimal free resolution of $R/L_{2,6,p}$.  Then
      \[\pd(\ker(d_3^\ast)) \ge p.\]
      \end{prop}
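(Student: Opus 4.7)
The plan is to apply Lemma~\ref{Lpi1} to reduce the desired inequality to a projective-dimension bound on an auxiliary ideal $J$, then use a colon-ideal short exact sequence to shift the bound to $R/(J:g)$, and finally detect a large-height associated prime via Proposition~\ref{P3gen}. Concretely, by Lemma~\ref{Lpi1} there is a short exact sequence
\[
0 \to \ker(\pi) \to \im(d_3^\ast) \to \im(\pi) \to 0
\]
with $\pd_R(\im(\pi))=3$ and $\ker(\pi) \iso J$, where $J := (x,y,g^2,fg,f^2,t^{p-1}gh)$ under the specialization $f = a^p$, $g = b^p$, $h = h_p$. Two applications of Proposition~\ref{Ppd}—one to this sequence and one to $0 \to \ker(d_3^\ast) \to R^6 \to \im(d_3^\ast) \to 0$—show that if $\pd_R(\ker(\pi)) \ge 3$ then $\pd_R(\ker(d_3^\ast)) = \pd_R(\ker(\pi)) - 1$, so it is enough to prove $\pd_R(R/J) \ge p+2$.

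For this, I would use the short exact sequence
\[
0 \to R/(J:g) \xrightarrow{\cdot g} R/J \to R/(J+(g)) \to 0.
\]
A direct computation gives $J+(g)=(x,y,f^2,g)$, a complete intersection of height $4$ satisfying $\pd_R(R/(J+(g)))=4$, and $J:g=(x,y,f,g,t^{p-1}h)$. Setting $S = K[t,a,b,c_1,\ldots,c_p]$, so that $R = S[x,y]$ with $x,y$ a regular sequence on $R$ annihilating $R/(J:g) \iso S/(f,g,t^{p-1}h)$, the standard change-of-rings identity yields
\[
\pd_R(R/(J:g)) = \pd_S(S/(f,g,t^{p-1}h)) + 2.
\]
An application of Proposition~\ref{Ppd}(1) then reduces the problem to proving $\pd_S(S/(f,g,t^{p-1}h)) \ge p+2$.

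For this last bound I would invoke Lemma~\ref{Lasspd}: it suffices to show that the prime $\q = (a,b,c_1,\ldots,c_p)$, which has height $p+2$ in $S$, is associated to $S/(f,g,t^{p-1}h)$. The natural candidate associator is $s'' := t^{p-1}a^{p-1}b^{p-1}$. The inclusions $s''a = t^{p-1}b^{p-1}f$ and $s''b = t^{p-1}a^{p-1}g$ are immediate, while $s''c_i \in (f,g,t^{p-1}h)$ follows by multiplying by $t^{p-1}$ the relation $a^{p-1}b^{p-1}c_i \in (f,g,h) \subset K[a,b,c_1,\ldots,c_p]$ guaranteed by Proposition~\ref{P3gen}. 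To see that $s'' \notin (f,g,t^{p-1}h)$: in a hypothetical relation $t^{p-1}a^{p-1}b^{p-1} = \alpha f + \beta g + \gamma\,t^{p-1}h$, extracting the coefficient of $t^{p-1}$ (viewing elements of $S$ as polynomials in $t$ over $K[a,b,c_1,\ldots,c_p]$) forces $a^{p-1}b^{p-1} \in (f,g,h)$ inside $K[a,b,c_1,\ldots,c_p]$, contradicting Proposition~\ref{P3gen}.

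The main obstacle is this associated-prime calculation. The natural first attempt—that the full maximal ideal of $S$ is associated, again with $s''$ as witness—fails, since $s''\cdot t = t^p a^{p-1}b^{p-1}$ would demand $a^{p-1}b^{p-1} \in (f,g,h)$, contradicting Proposition~\ref{P3gen}. One therefore drops $t$ from the candidate prime; the exponent $p-1$ on $t$ in $s''$ is just large enough to absorb, via the explicit form of $h_p$ from Proposition~\ref{P3gen}, the $c_i$-multiplications into the ideal.
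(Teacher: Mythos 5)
Your argument is correct, and its skeleton is the paper's: Lemma~\ref{Lpi1} together with the two short exact sequences reduces the claim to $\pd(R/J)\ge p+2$ for $J=(x,y,g^2,fg,f^2,t^{p-1}gh)$, and the final input is the same fact from Proposition~\ref{P3gen}, namely $a^{p-1}b^{p-1}\in\bigl((f_p,g_p,h_p):\m\bigr)\setminus(f_p,g_p,h_p)$. Where you diverge is the middle step. The paper proves $\pd(R/J)\ge p+2$ in one stroke by exhibiting the single monomial $s=t^{p-1}a^{p-1}b^{2p-1}$ and checking $s\in(J:\p)\setminus J$ for $\p=(a,b,c_1,\dots,c_p)$ directly in $R/J$; the extra factor of $b^p=g_p$ in $s$ (compared to your witness) is what absorbs the multiplications by $a$, $b$, $c_i$ into the generators $fg$, $g^2$, $t^{p-1}gh$ of $J$, after which Lemma~\ref{Lasspd} finishes. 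You instead colon out $g$, compute $J:g=(x,y,f,g,t^{p-1}h)$ and $J+(g)=(x,y,g,f^2)$, apply Proposition~\ref{Ppd} to $0\to R/(J:g)\to R/J\to R/(J+(g))\to 0$, pass to the subring $S$ by a change of rings, and run the same associated-prime argument with the cleaner witness $t^{p-1}a^{p-1}b^{p-1}$ against $(f,g,t^{p-1}h)$; your colon computations (which I checked: $f,g$ is a regular sequence mod $(x,y)$, so $J:g$ is as you claim) are correct, and your route even yields the slightly stronger bound $\pd(R/J)\ge p+4$. The trade-off is that the paper's version is shorter but requires guessing the right monomial in $R/J$ at the outset, whereas yours mechanizes that guess at the cost of two auxiliary ideal computations and a change-of-rings identity.
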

      
      \begin{proof} Set
            \[J = (x, y, g_p^{2},f_p g_p, f_p^{2}, t^{p-1} g_ph_p).\]
      
      By the previous lemma, we have $\pd(\ker(d_3^\ast)) = \pd(\im(d_3^\ast))  - 1 = \pd(J) - 1 = \pd(R/J) - 2$, as long as $\pd(J) \ge 3$.
        It is clear that $s = t^{p-1}a^{p-1}b^{2p-1} \notin J$ since none of the terms of the generators of $J$ divide $s$.  Let $\p = (a, b, c_1,\ldots,c_p)$.  Since $a^{p-1}b^{p-1} \in (f_p, g_p, h_p):\p$, it is easy to check that $s \in J:\p$.  Hence $R/J$ has an associated prime with height at least $p + 2$, and by Lemma~\ref{Lasspd}, $\pd(R/J) \ge p + 2$.  Therefore, $\pd(\ker(d_3^\ast)) \ge p$.  
\end{proof}

\subsection{Construction of $L_{2,20,p}$}

\begin{prop}\label{Pl220p}
Let $R$ be a polynomial ring over $K$ and let $x,y,t$ be independent linear forms in $R$.  Let $f, g, h \in R_d$ for some $d \ge 1$ such that $\mathrm{ht}(x,y,f,g) = 4$.  Let

\begin{eqnarray*} L &=& (x^{5},y^{5},x^{4} y^{4},x^{4} y^{3} f+x^{3} y^{4}
      g,x^{4} y^{2} f^{2}+x^{3} y^{3} f g+x^{2} y^{4} g^{2},\\
      &&x^{4} y f^{3}+x^{3}y^{2} f^{2} g+x^{2} y^{3} f g^{2}+x y^{4} g^{3},\\
      &&x^{4} f^{4}+x^{3} y f^{3}
      g+x^{2} y^{2} f^{2} g^{2}+x y^{3} f g^{3}+y^{4} g^{4}+x^{4} y^{3} t^{3d - 3} h).
      \end{eqnarray*}
      Then $R/L$ has the following minimal free resolution:
\[{R} \xleftarrow{d_1} {R}^{7} \xleftarrow{d_2} {R}^{10} \xleftarrow{d_3} {R}^{4} \leftarrow 0\]
\[d_1 = \begin{pmatrix}x^{5}&
      y^{5}&
      x^{4} y^{4}&
      x^{4} y^{3} f+x^{3} y^{4} g&
      x^{4} y^{2} f^{2}+x^{3} y^{3} f g+x^{2} y^{4} g^{2}& 
      \ldots
      \end{pmatrix}\]
\[ d_2 = \scalebox{.75}{ $\begin{pmatrix}{-y^{4}}&
      0&
      {-y^{3} f}&
      0&
      {-y^{2} f^{2}}&
      0&
      {-y f^{3}}&
      0&
      -f^{4}-y^{3} t^{3d-3} h&
      0\\
      0&
      {-x^{4}}&
      0&
      {-x^{3} g}&
      0&
      {-x^{2} g^{2}}&
      0&
      {-x g^{3}}&
      0&
      {-g^{4}}\\
      x&
      y&
      {-g}&
      {-f}&
      0&
      0&
      0&
      0&
      0&
      {-t^{3d-3} h}\\
      0&
      0&
      x&
      y&
      {-g}&
      {-f}&
      0&
      0&
      0&
      0\\
      0&
      0&
      0&
      0&
      x&
      y&
      {-g}&
      {-f}&
      0&
      0\\
      0&
      0&
      0&
      0&
      0&
      0&
      x&
      y&
      {-g}&
      {-f}\\
      0&
      0&
      0&
      0&
      0&
      0&
      0&
      0&
      x&
      y\\
      \end{pmatrix}$},\]
            \[d_3 = \begin{pmatrix}f&
      0&
      0&
      t^{3d-3} h\\
      {-g}&
      0&
      0&
      0\\
      {-y}&
      f&
      0&
      0\\
      x&
      {-g}&
      0&
      0\\
      0&
      {-y}&
      f&
      0\\
      0&
      x&
      {-g}&
      0\\
      0&
      0&
      {-y}&
      f\\
      0&
      0&
      x&
      {-g}\\
      0&
      0&
      0&
      {-y}\\
      0&
      0&
      0&
      x\\
      \end{pmatrix}.\]
      Moreover
      \begin{enumerate}
      \item $L$ is $(x,y)$-primary, and
      \item $e(R/L) = 20$.
      \end{enumerate}
\end{prop}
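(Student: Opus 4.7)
The plan is to prove the statement by the same three-step template used for Proposition~\ref{Pl25p} and the analogous statements for $L_{2,6,p}$ and $L_{3,6,p}$: first, verify that the displayed sequence is the minimal free resolution of $R/L$; second, deduce that $L$ is $(x,y)$-primary; and third, compute $e(R/L)=20$.

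For the first step, I would check by direct matrix multiplication that the displayed sequence is a complex. The block structure of $d_3$ as a shifted Koszul-type matrix on $(f,-g,-y,x)$ together with the correction $t^{3d-3}h$ in the last column is matched by a complementary ``telescoping'' structure on $d_2$, so that $d_2 d_3 = 0$ column by column; the verification $d_1 d_2 = 0$ is analogous. I would then verify the Buchsbaum--Eisenbud height conditions of Theorem~\ref{Tres} together with the $(S_1)$ strengthening of Proposition~\ref{Csk}. With $\rank d_1 = 1$, $\rank d_2 = 6$, and $\rank d_3 = 4$, these reduce to $\h(I_1(d_1)) \ge 1$ (trivial), $\h(I_6(d_2)) \ge 2$, and $\h(I_4(d_3)) \ge 4$. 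For the last of these, a direct computation shows that the $4\times 4$ minors of $d_3$ obtained from the row sets $\{4,6,8,10\}$, $\{3,5,7,9\}$, $\{2,4,6,8\}$, and $\{1,3,5,7\}$ equal respectively $\pm x^4$, $\pm y^4$, $\pm g^4$, and $f^4 + y^3 t^{3d-3}h$. Since $f^4 \equiv f^4 + y^3 t^{3d-3}h \pmod{(y)}$, the radical of $I_4(d_3)$ contains $(x,y,f,g)$, which has height $4$ by hypothesis. A similar inspection of $d_2$ yields $6\times 6$ minors with high-power $x$ and $y$ contributions, giving $\h(I_6(d_2)) \ge 2$.

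From Theorem~\ref{Tres} the complex is then a resolution of $R/L$, and by Proposition~\ref{Csk} $R/L$ satisfies $(S_1)$, so $L$ is unmixed of height $2$. Every generator of $L$ manifestly lies in $(x,y)$, so $\sqrt{L}=(x,y)$ and $L$ is $(x,y)$-primary. For the multiplicity, the associativity formula gives $e(R/L) = \lambda(R_{(x,y)}/L R_{(x,y)})$. After localizing at $(x,y)$, the elements $f$ and $g$ are units (since $\h(x,y,f,g)=4$ implies $f,g \notin (x,y)$), so the substitution $X = xf$, $Y = yg$ satisfies $(X,Y) = (x,y)R_{(x,y)}$ and the generators of $L R_{(x,y)}$ become unit multiples of $X^5$, $Y^5$, $X^4 Y^4$, $X^4 Y^3 + X^3 Y^4$, $X^4 Y^2 + X^3 Y^3 + X^2 Y^4$, $X^4 Y + X^3 Y^2 + X^2 Y^3 + X Y^4$, and $X^4 + X^3 Y + X^2 Y^2 + XY^3 + Y^4$ plus a correction in $(X,Y)^7$. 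Starting with the $25$-element monomial basis $\{X^iY^j : 0\le i,j \le 4\}$ of $R_{(x,y)}/(X^5,Y^5)$, each of the remaining five generators eliminates exactly one basis vector (its leading monomial $X^4Y^k$ for $k=4,3,2,1,0$), so $\lambda(R_{(x,y)}/L R_{(x,y)}) = 25 - 5 = 20$.

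The main obstacle will be the verification $\h(I_4(d_3)) \ge 4$, as the $4\times 4$ minor yielding the pure power $f^4$ is entangled with the correction entry $t^{3d-3}h$ in the last column of $d_3$ and requires a reduction modulo $(y)$ to expose the clean $f^4$. Once that is handled, the remainder of the argument is routine bookkeeping, and the multiplicity count reduces to tracking how the ``almost homogeneous'' generators annihilate distinct leading monomials in the fixed monomial basis of $R_{(x,y)}/(X^5,Y^5)$.
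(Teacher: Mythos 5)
Your proposal is correct and follows essentially the same route as the paper's proof: check the complex, verify the Buchsbaum--Eisenbud height conditions together with the $(S_1)$ criterion of Proposition~\ref{Csk} (your minors $x^4, y^4, g^4, f^4+y^3t^{3d-3}h$ of $d_3$ are exactly the elements the paper exhibits in $I_4(d_3)$, and your $\h(I_6(d_2))\ge 2$ silently corrects the paper's typo ``$I_2(d_2)$''), then deduce primariness from $\sqrt{L}=(x,y)$ and compute the multiplicity by localizing at $(x,y)$. You actually supply more detail than the paper does, in particular the explicit substitution $X=xf$, $Y=yg$ and the resulting length count $25-5=20$, which the paper asserts without computation.
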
      
      
\begin{proof} One checks that $x^5, y^5 \in I_1(d_1)$, $x^{10}, y^{10} \in I_2(d_2)$ and $x^4, y^4, g^4, f^4 - y^3t^{3d-3}h \in I_4(d_3)$.  So the above is a resolution of $R/L$ and $L$ is unmixed.  Since $\sqrt{L} = (x,y)$, $L$ is $(x,y)$-primary.  Localizing at $\p = (x,y)$ we get $e(R/L) = \lambda(R_\p/L_\p) = 20$. 
\end{proof}
      
      \begin{lem} Let $L$ be as in Proposition~\ref{Pl220p}.  Let $\pi:\im(d_3^\ast) \to R^3$ be projection onto the first $3$ rows.  Then $\pd(\im(\pi)) = 3$ and 
      \[\ker(\pi) \iso (x, y, g h t^{3d-3},g^{4},f g^{3},f^{2}
      g^{2},f^{3} g,f^{4}).\]
      \end{lem}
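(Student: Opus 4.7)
The plan is to mirror the strategy of Lemma~\ref{Lpi1}. First I note that the map $\pi$ sends the $j$-th column of $d_3^\ast$ to its first three entries, so columns $9$ and $10$ of $d_3^\ast$ vanish under $\pi$, while the remaining eight columns generate $\im(\pi) \subseteq R^3$. Writing $\Phi'$ for the $3 \times 8$ matrix consisting of the first three rows and first eight columns of $d_3^\ast$, its rows form a ``staircase'' of shifted copies of $(f,-g,-y,x)$, with consecutive rows sharing two columns. So the eight generators of $\im(\pi)$ satisfy three overlapping Koszul-type relations, one for each row, arising from the regular sequence $(f,-g,-y,x)$ ensured by the height hypothesis $\h(x,y,f,g)=4$.

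Next I will construct an explicit free resolution of $\im(\pi)$ of length $3$ by combining the three Koszul complexes on $(f,-g,-y,x)$ associated to the rows of $\Phi'$, taking the overlap into account. Exactness of the resulting complex is verified via the Buchsbaum--Eisenbud criterion (Theorem~\ref{Tres}), using the height hypothesis. This establishes $\pd(\im(\pi)) = 3$. Denote the first two differentials by $\partial_1 : R^8 \to \im(\pi)$ (the presentation) and $\partial_2 : F_2 \to R^8$ (the first syzygy map).

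To identify $\ker(\pi)$, I set up the commutative diagram
\[
\xymatrix{
& & & F_2 \ar[d]^{\partial_2} & \\
& & R^{10} \ar[d]^{d_3^\ast} & R^8 \ar@{_(->}[l]_{\iota} \ar[d]^{\partial_1} & \\
0 \ar[r] & \ker(\pi) \ar[r] & \im(d_3^\ast) \ar[d] \ar[r]^{\pi} & \im(\pi) \ar[d] \ar[r] & 0 \\
& & 0 & 0 &
}
\]
in which $\iota : R^8 \hookrightarrow R^{10}$ is the inclusion of the first eight coordinates, so $\pi \circ d_3^\ast \circ \iota = \partial_1$. Arguing exactly as in Lemma~\ref{Lpi1},
\[
\ker(\pi) = \im\bigl(d_3^\ast \circ \iota \circ \partial_2\bigr) + \im\begin{pmatrix} 0 & 0 \\ 0 & 0 \\ 0 & 0 \\ -y & x \end{pmatrix},
\]
the second summand recording the images of columns $9$ and $10$ of $d_3^\ast$, which land in the fourth coordinate of $R^4$ and contribute the generators $x$ and $y$. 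Evaluating $d_3^\ast \circ \iota \circ \partial_2$ explicitly yields the remaining generators: the term $ght^{3d-3}$ arises from the $t^{3d-3}h$ entry in the $(4,1)$-position of $d_3^\ast$, and the powers $g^4, fg^3, f^2g^2, f^3g, f^4$ spanning $(f,g)^4$ come from iterating the basic Koszul relations through all three overlapping rows of $\Phi'$.

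The main obstacle is constructing the length-$3$ resolution of $\im(\pi)$ and choosing $\partial_2$ so that the final matrix computation is transparent. Because the three Koszul relations overlap pairwise (each consecutive pair shares two of the four terms of the regular sequence $(f,-g,-y,x)$), the first syzygy module of $\im(\pi)$ is not simply the direct sum of the three corresponding Koszul syzygy modules, and the overlap must be tracked carefully; this is precisely the step that forces the exponents $4$ in $(f,g)^4$ rather than the $2$ that appears in Lemma~\ref{Lpi1}. Once $\partial_2$ is in hand, the rest of the argument reduces to straightforward matrix multiplication.
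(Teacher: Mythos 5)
Your proposal is correct and follows essentially the same route as the paper: the paper also resolves $\im(\pi)$ ``generically'' over the regular sequence $f,-g,-y,x$ (obtaining the length-$3$ resolution $0 \to R^3 \to R^8 \to R^{10} \to R^8 \to \im(\pi) \to 0$) and then computes $\ker(\pi) = \im(d_3^\ast \circ \iota \circ \partial_1) + \im\begin{pmatrix}0&0\\0&0\\0&0\\-y&x\end{pmatrix}$ exactly as in your diagram, up to a shift in the indexing of the $\partial_i$. The only content you defer --- the explicit first-syzygy matrix of the $3\times 8$ staircase presentation and the resulting matrix product --- is precisely what the paper writes out, and your identification of where each generator of $(x, y, ght^{3d-3}, g^4, fg^3, f^2g^2, f^3g, f^4)$ comes from matches that computation.
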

      
      \begin{proof}
      The proof is the exactly the same as Lemma~\ref{Lpi1}.  Since $\h(x, y, f, g) = 4$, we can resolve $\im(\pi)$ generically.   The relevant commutative diagram and matrices are listed below.
      \[
\xymatrix{
&&&0 \ar[d]&\\
&&&R^{3} \ar[d]^{\partial_3}&\\
&&&R^{8} \ar[d]^{\partial_2}&\\
&&&R^{10} \ar[d]^{\partial_1}&\\
&&R^{10} \ar[d]^{d_3^\ast}&R^8 \ar@{_(->}[l]_{\iota} \ar[d]^{\partial_0} &\\
0 \ar[r] & \ker(\pi) \ar[r] & \im(d_3^\ast) \ar[r]^{\pi} \ar[d] & \im(\pi) \ar[r] \ar[d] & 0\\
&&0&0&
}
\]
where
\[
d_3^\ast = \begin{pmatrix}f&
      {-g}&
      {-y}&
      x&
      0&
      0&
      0&
      0&
      0&
      0\\
      0&
      0&
      f&
      {-g}&
      {-y}&
      x&
      0&
      0&
      0&
      0\\
      0&
      0&
      0&
      0&
      f&
      {-g}&
      {-y}&
      x&
      0&
      0\\
      h t^{3d-3}&
      0&
      0&
      0&
      0&
      0&
      f&
      {-g}&
      {-y}&
      x\\
      \end{pmatrix},
                  \]\[
      \pi = \begin{pmatrix} 1&0&0&0\\0&1&0&0\\0&0&1&0\end{pmatrix},
                  \]\[
      \iota = \begin{pmatrix}1&
      0&
      0&
      0&
      0&
      0&
      0&
      0\\
      0&
      1&
      0&
      0&
      0&
      0&
      0&
      0\\
      0&
      0&
      1&
      0&
      0&
      0&
      0&
      0\\
      0&
      0&
      0&
      1&
      0&
      0&
      0&
      0\\
      0&
      0&
      0&
      0&
      1&
      0&
      0&
      0\\
      0&
      0&
      0&
      0&
      0&
      1&
      0&
      0\\
      0&
      0&
      0&
      0&
      0&
      0&
      1&
      0\\
      0&
      0&
      0&
      0&
      0&
      0&
      0&
      1\\
      0&
      0&
      0&
      0&
      0&
      0&
      0&
      0\\
      0&
      0&
      0&
      0&
      0&
      0&
      0&
      0\\
      \end{pmatrix},
                  \]\[
      \partial_0 = \begin{pmatrix}f&
      {-g}&
      {-y}&
      x&
      0&
      0&
      0&
      0\\
      0&
      0&
      f&
      {-g}&
      {-y}&
      x&
      0&
      0\\
      0&
      0&
      0&
      0&
      f&
      {-g}&
      {-y}&
      x\\
      \end{pmatrix},
                  \]\[
      \partial_1 = \begin{pmatrix}0&
      0&
      0&
      {-x}&
      g&
      y^{3}&
      0&
      0&
      0&
      0\\
      0&
      0&
      0&
      {-y}&
      f&
      0&
      {-y^{3}}&
      {-x y^{2}}&
      {-x^{2} y}&
      x^{3}\\
      0&
      0&
      {-x}&
      g&
      0&
      y^{2} f&
      y^{2} g&
      0&
      0&
      0\\
      0&
      0&
      {-y}&
      f&
      0&
      0&
      0&
      {-y^{2} g}&
      {-x y g}&
      x^{2} g\\
      0&
      {-x}&
      g&
      0&
      0&
      y f^{2}&
      y f g&
      y g^{2}&
      0&
      0\\
      0&
      {-y}&
      f&
      0&
      0&
      0&
      0&
      0&
      {-y g^{2}}&
      x g^{2}\\
      x&
      g&
      0&
      0&
      0&
      f^{3}&
      f^{2} g&
      f g^{2}&
      g^{3}&
      0\\
      y&
      f&
      0&
      0&
      0&
      0&
      0&
      0&
      0&
      g^{3}\\
      \end{pmatrix},
            \]\[
      \partial_2 = \begin{pmatrix}{-f^{3}}&
      {-f^{2} g}&
      0&
      {-f g^{2}}&
      0&
      {-g^{3}}&
      0&
      0\\
      y f^{2}&
      y f g&
      0&
      y g^{2}&
      0&
      0&
      0&
      {-g^{3}}\\
      y^{2} f&
      y^{2} g&
      0&
      0&
      0&
      0&
      y g^{2}&
      {-x g^{2}}\\
      y^{3}&
      0&
      0&
      0&
      y^{2} g&
      0&
      x y g&
      {-x^{2} g}\\
      0&
      0&
      y^{3}&
      0&
      x y^{2}&
      0&
      x^{2} y&
      {-x^{3}}\\
      x&
      0&
      {-g}&
      0&
      0&
      0&
      0&
      0\\
      {-y}&
      x&
      f&
      0&
      {-g}&
      0&
      0&
      0\\
      0&
      {-y}&
      0&
      x&
      f&
      0&
      {-g}&
      0\\
      0&
      0&
      0&
      {-y}&
      0&
      x&
      f&
      g\\
      0&
      0&
      0&
      0&
      0&
      y&
      0&
      f\\
      \end{pmatrix},
      \]\[
      \partial_3 = \begin{pmatrix}g&
      0&
      0\\
      {-f}&
      g&
      0\\
      x&
      0&
      0\\
      0&
      {-f}&
      g\\
      {-y}&
      x&
      0\\
      0&
      0&
      {-f}\\
      0&
      {-y}&
      x\\
      0&
      0&
      y\\
      \end{pmatrix}.
\]
The proof is finished by computing $\ker(\pi) = \im(d_3^\ast \circ \iota \circ \partial_1) + \im \begin{pmatrix}0&0\\0&0\\0&0\\-y&x\end{pmatrix}$.
      \end{proof}
      
\begin{prop} Let $p \ge 3$.  As before, let $f_p = a^p$, $g_p = b^p$ and $h_p = h_p =  a^{p-1}c_1 + a^{p-2}bc_2 + \cdots + b^{p-1}c_p$.  Set 
\[L_{2,20,p} = (x^{5},y^{5},x^{4} y^{4},x^{4} y^{2} f_p^{2}+x^{3} y^{3} f_p g_p+x^{2} y^{4} g_p^{2},x^{4} y f_p^{3}+x^{3}
      y^{2} f_p^{2} g_p+x^{2} y^{3} f_p g_p^{2}+x y^{4} g_p^{3},\]
      \[x^{4} y^{3} f+x^{3} y^{4}
      g,x^{4} f_p^{4}+x^{3} y f_p^{3}
      g_p+x^{2} y^{2} f_p^{2} g_p^{2}+x y^{3} f_p g_p^{3}+y^{4} g_p^{4}+x^{4} y^{3} t^{3p - 3} h_p).\]
            Let $d_3$ be the third map in the minimal free resolution of $R/L_{2,20,p}$.  Then
      \[\pd(\ker(d_3^\ast)) \ge p.\]
      \end{prop}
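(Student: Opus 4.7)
The plan is to mirror the argument used for $L_{2,6,p}$, with the only real adjustment being the monomial witnessing a large-height associated prime. Set
\[
J := (x,\, y,\, g_p h_p t^{3p-3},\, g_p^4,\, f_p g_p^3,\, f_p^2 g_p^2,\, f_p^3 g_p,\, f_p^4).
\]
The preceding lemma identifies $\ker(\pi) \iso J$ and gives $\pd(\im(\pi)) = 3$. The short exact sequence
\[
0 \to \ker(\pi) \to \im(d_3^\ast) \to \im(\pi) \to 0,
\]
together with Proposition~\ref{Ppd} applied in both directions, yields $\pd(\im(d_3^\ast)) = \pd(J)$ as soon as $\pd(J) \ge 4$; combined with $\pd(\ker(d_3^\ast)) = \pd(\im(d_3^\ast)) - 1$, the goal reduces to showing $\pd(R/J) \ge p + 2$.

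For this last step I would invoke Lemma~\ref{Lasspd}: it suffices to produce $s \in R$ with $s \notin J$ and $\p \cdot s \subseteq J$, where $\p := (a, b, c_1, \ldots, c_p)$ is the maximal ideal of the ambient ring of $f_p, g_p, h_p$; then some associated prime of $R/J$ contains $\p$ and so has height at least $p+2$.

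I propose the witness $s := t^{3p-3} a^{p-1} b^{4p-1}$. The exponent $4p-1$ on $b$ (rather than the $2p-1$ used in the $L_{2,6,p}$ case) is forced by the fact that $g_p^4 = b^{4p}$ is now the lowest pure power of $g_p$ appearing in $J$. That $s \notin J$ is checked by specializing $c_1, \ldots, c_p \mapsto 0$: modulo this specialization $J$ becomes the monomial ideal $(x, y, a^{4p}, a^{3p}b^p, a^{2p}b^{2p}, a^p b^{3p}, b^{4p})$, no generator of which divides $s$. The containments $sa = b^{p-1}t^{3p-3}(f_p g_p^3) \in J$ and $sb = a^{p-1}t^{3p-3} g_p^4 \in J$ are immediate. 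For each $sc_i$, the key input is Proposition~\ref{P3gen}, which furnishes an expression $a^{p-1}b^{p-1}c_i = \alpha_i f_p + \beta_i g_p + \gamma_i h_p$; multiplying by $b^{3p} t^{3p-3} = g_p^3 t^{3p-3}$ gives
\[
s c_i = \alpha_i t^{3p-3}(f_p g_p^3) + \beta_i t^{3p-3} g_p^4 + \gamma_i g_p^2 \cdot (t^{3p-3} g_p h_p),
\]
and each of the three summands lies in $J$ since $f_p g_p^3$, $g_p^4$, and $t^{3p-3} g_p h_p$ are among its generators.

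The only genuinely nontrivial step is selecting $s$ with the correct exponents so that multiplication by each of $a$, $b$, and the $c_i$ lands on a single generator of $J$ while $s$ itself avoids all of them. The homological reduction and the invocation of Proposition~\ref{P3gen} follow the template of the $L_{2,6,p}$ case, so the final bound $\pd(\ker(d_3^\ast)) \ge p$ drops out once the correct monomial is in hand.
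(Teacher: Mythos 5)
Your proposal is correct and follows essentially the same route as the paper: the same identification $\ker(\pi)\iso J$, the same witness monomial $s = t^{3p-3}a^{p-1}b^{4p-1}$ showing $s\in (J:(a,b,c_1,\ldots,c_p))\setminus J$, and the same appeal to Lemma~\ref{Lasspd} to get $\pd(R/J)\ge p+2$. The paper compresses the verification to ``one again checks,'' whereas you spell out the containments $sa,sb,sc_i\in J$ and the specialization argument for $s\notin J$; these details are accurate.
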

            
      \begin{proof} By the previous lemma, we have $\pd(\ker(d_3^\ast)) = \pd(\im(d_3^\ast)) - 1 = \pd(J) - 1$,  where
      \[J = (x, y, g_p h_p t^{3p-3},g^{4},f_p g_p^{3},f_p^{2}
      g_p^{2},f_p^{3} g_p,f_p^{4}).\] 
      One again checks that $s = t^{3p - 3} a^{p-1} b^{4p - 1} \in J:(a, b, c_1,\ldots, c_p) \setminus J$.  It follows that $\pd(\ker(d_3^\ast)) \ge p$ by Lemma~\ref{Lasspd}.
      \end{proof}

\section{Examples and Questions}\label{Seq}

In this section, we first explicitly construct one of the primary ideals from the main theorem.  We also discuss the problem of classifying ideals satisfying Serre's $(S_2)$ property.

In general we do not write down the primary ideals with large projective dimension as they have a large number of generators.  For instance, if one takes $f, g, h$ to be the forms from Proposition~\ref{P3gen}, the minimal number of generators of $I_{2,4,p}$ were computed using Macaulay2 \cite{M2} and are listed in the following table.\\

\begin{center}
\begin{tabular}{r|l|l|l|l|l|l|l|l|l|l|l}
$p$:&5&6&7&8&9&10&11&12&13&14&15\\
\hline 
$\mu(I_{2,4,p})$:&9&12&17&25&38&59&93&148&237&381&614\\
\end{tabular}\\
\end{center}

We explicitly compute $I_{2,4,6}$ in the following example.

\begin{eg} Here we give a homogeneous $(x,y)$-primary ideal $I$ in a polynomial ring $R$ with $e(R/I) = 4$ and $\pd(R/I) = 6$.  Let $R = K[a,b,c,d,e,x,y]$.  First set
\[f = a^3, \qquad g = b^3, \qquad h = a^2c+abd+b^2e.\]
Then $\pd(R/(f,g,h)) = 5$, $\mathrm{ht}(f,g,h) = 2$ and $\pd(R/(x,y,f,g,h)) = 7$.  By the above construction, if we set
\[L = (x^3, x^2y, xy^2, y^3, fx^2 + gxy + hy^2) \quad \text{and} \quad I = (x^3,y^3):L.\]
then
\[\pd(R/I) = \pd(R/(x,y,f,g,h)) - 1 = 7 - 1 = 6,\]
by Proposition~\ref{Pl25p}.
  In this case, $I$ has the following $12$ minimal generators:
\[\begin{matrix}x^{3},
      x^2 y, 
      x y^2, 
      y^{3}\\
      a^{2} c x y+a b d x y+b^{2} e x y-b^{3} y^{2}\\
      a c d x^{2}+b d^{2} x^{2}-b c e x^{2}-a e^{2} x y-a^{2} d y^{2}+a b e y^{2}\\
      b c^{2} x^{2}-a d^{2} x y+a c e x y-b d e x y-a b c y^{2}+b^{2} d y^{2}\\
      a c^{2} x^{2}+b c d x^{2}-a d e x y-b e^{2} x y-a^{2} c y^{2}+b^{2} e y^{2}\\
      b^{2} c x^{2}+a^{2} d x y+a b e x y-a b^{2} y^{2}\\
      a b c x^{2}+b^{2} d x^{2}+a^{2} e x y-a^{2} b y^{2}\\
      a^{2} c x^{2}+a b d x^{2}+b^{2} e x^{2}-a^{3} y^{2}\\
      b^{3} x^{2}-a^{3} x y\\
      \end{matrix}\]
      and Betti table
      \[\begin{tabular}{r|ccccccc}
      &0&1&2&3&4&5&6\\
      \hline
     \text{0:}&1&\text{-}&\text
      {-}&\text{-}&\text{-}&\text{-}&\text{-}\\\text{1:}&\text{-}&\text{-}&\text{-}&\text{-}&\text{-}&\text{-}&\text{-}\\\text{2:}&\text{-}&4&3&\text{-}&\text{-}&\text{-}&\text{-}\\\text{3:}&\text{-}&\text{-}&\text{-}&\text{-}&\text{-}&\text{-}&\text{-}\\\text{4:}&\text{-}&8&26&33&21&7&1\\\end{tabular}\]
\end{eg}

Finally, we remark that while all of the ideals we construct satisfy Serre's $(S_1)$ condition, almost none of the ideals we construct are $(S_2)$, even on the punctured spectrum.  Let $I$ be unmixed of height $h$ and suppose $\x = x_1,\ldots, x_h \in I$ is a regular sequence with $I \neq (\x)$.  Set $L = (\x):I$.  By \cite[Theorem 4.1]{Schenzel}, $S/I$ is $(S_2)$ if and only if $H^{d-1}_\m(R/L) = 0$, where $d = \dim(R/I)$ and $\m$ is the graded maximal ideal of $R$.  By local duality, this is equivalent to $\ext_R^{h+1}(R/L,R) = 0$.  As each of the four ideals from Section~\ref{S4ideals} satisfy $\pd(R/L) = \h(L) + 1$, $\ext_R^{\h(L)+1}(R/L, R)$ is the cokernel of a nonsurjective map and hence nonzero itself.  So none of the ideals we construct are $(S_2)$.

We could weaken the question to ask which ideals are $(S_2)$ on the punctured spectrum, i.e. $(S_2)$ scheme theoretically in projective space.  Such ideals $I$ satisfy
\[\depth(R_\p/I_\p) \ge \min_{\substack{\p \in \spec(R)\\\p \neq \m}} \{2,\dim(R_\p/I_\p)\}.\]
One checks that $R/I$ is $(S_2)$ on the punctured spectrum if and only if $\lambda(\ext_R^{h + 1}(R/L,R)) < \infty$.  In the construction of $L = L_{2,5,p}$, if we take the forms $f, g, h$ to be variables and work in the ring $R = K[x, y, f, g, h]$, then the third differential in the resolution of $L$ is $d_3 = \begin{pmatrix} x&y&f&g&h \end{pmatrix}^\mathsf{T}$.  Then $\ext_R^3(R/L,R) = \coker(d_3^\ast) \iso R/(x,y,f,g,h)$, which is finite length by construction.  Hence the linked ideal 
\[I = (x^3, y^3):L = (x^3,x^2y, xy^2, y^3, x^{2} g-x y h, x y f-y^{2} g, x^{2} f-y^{2} h)\]
is $(S_2)$ on the punctured spectrum.  Since $\pd(R/L) = 3$, it follows that $R/I$ is actually Cohen-Macaulay on the punctured spectrum.  This ideal appears as one case in the Manolache's characterization of Cohen-Macaulay structures \cite[Theorem 1]{Manolache}.  However, if we take $f, g, h$ to be forms that generate an ideal of projective dimension at least $4$, then $\ext_R^3(R/L, R)$ will no longer be finite length and $I$ will no longer be scheme-theoretically $(S_2)$.  So we pose the following question:
\begin{qst} Is there a finite classification of homogeneous unmixed or primary ideals of a given height and multiplicity that satisfy Serre's $(S_2)$ condition? Is there a classification for such ideals that are $(S_2)$ on the punctured spectrum?
\end{qst}
\noindent Three of the ideals in Proposition~\ref{Pe2} (cases $(1)$, $(2)$ and $(5)$) are Cohen-Macaulay, while the other two are not even $(S_2)$ globally.  Since $(S_2)$ implies $(S_1)$, we have a complete characterization of height $2$ and multiplicity $2$ ideals that are globally $(S_2)$.  The other two ideals can be chosen to define an $(S_2)$ projective scheme.
For instance, if $R = K[a,b,x,y]$ and $I = (x,y)^2 + (ax+by)$, then $R_\p/I_\p$ is $(S_2)$ for all nonmaximal primes $\p$.  Note that $(S_2)$ on the punctured spectrum does not imply $(S_1)$, so the latter question above remains unclear even in height $2$ and multiplicity $2$.  The case of ideals of height $2$ and multiplicity $3$ appears to be completely open.

\section*{acknowledgements}
This work was done while the authors were at MSRI.  We thank the organizers of the special year in commutative algebra and the rest of the personnel at MSRI for making this possible.  All of the examples and resolutions in this paper were computing with Macaulay2 \cite{M2}.  We also thank Manoj Kummini for useful several conversations.  The first author was supported by NSF grant number 1259142.

\bibliographystyle{amsplain}
\bibliography{bib}

\end{document}